\def\lf{\left}
\def\ri{\right}
\def\dbar{\bar\partial}
\def\ii{\sqrt{-1}}
\def\vv<#1>{\langle#1\rangle}
\def\bi{{\bar i}}
\def\bj{{\bar j}}
\def\bk{{\bar k}}
\def\bl{{\bar l}}
\def\bla{{\bar \lambda}}
\def\bmu{{\bar\mu}}
\def\lam{{\lambda}}
\def\a{{\alpha}}
\def\ba{{\bar\alpha}}
\def\XXint#1#2{\setbox0=\hbox{$#1{#2}{\int}$}{#2}\kern-.5\wd0 }
\def\XXint#1#2#3{{\setbox0=\hbox{$#1{#2#3}{\int}$}
     \vcenter{\hbox{$#2#3$}}\kern-.5\wd0}}
\def\vv<#1>{\langle#1\rangle}
\def\na{{\nabla}}
\def\cs#1{\left(#1\right)}
\newtheorem{thm}{Theorem}[section]
\newtheorem{lem}{Lemma}[section]
\newtheorem{prop}{Proposition}[section]
\newtheorem{cor}{Corollary}[section]
\theoremstyle{definition}
\newtheorem{defn}{Definition}[section]
\theoremstyle{remark}
\newtheorem{rem}{Remark}[section]
\numberwithin{equation}{section}
\begin{document}
\title{Hessian comparison and Eigenvalue of almost Hermitian manifolds}
\author{Chengjie Yu$^1$ }
\address{Department of Mathematics, Shantou University, Shantou, Guangdong, 515063, China}
\email{cjyu@stu.edu.cn}
\thanks{$^1$Research partially supported by the National Natural Science Foundation of
China (11001161),(10901072) and (11101106).}

\renewcommand{\subjclassname}{%
  \textup{2000} Mathematics Subject Classification}
\subjclass[2000]{Primary 53B25; Secondary 53C40}
\date{}
\keywords{Almost-Hermitian manifolds, quasi K\"ahler manifolds, nearly K\"ahler manifolds}
\begin{abstract}
In this paper, by using the Bochner technique on almost Hermitian manifolds, we obtain a complex Hessian comparison for almost Hermitian manifolds generalizing the Laplacian comparison for almost Hermitian manifolds by Tossati, and reprove a diameter estimate for almost Hermitian manifolds by Gray. Moreover, we obtain a sharp eigenvalue estimate on quasi K\"ahler manifolds and a sharp Hessian comparison on nearly K\"ahler manifolds.
\end{abstract}
\maketitle\markboth{Chengjie Yu}{Hessian comparison and eigenvalue comparison}
\section{Introduction}
A triple $(M,J,g)$ is called an almost Hermitian manifold if $J$ is an almost complex structure and $g$ is a $J$-invariant Riemannian metric. There are two connections, one is the Levi-Civita connection and the other one is the canonical connection, on almost Hermitian manifolds, that play important roles on the geometry of almost Hermitian manifolds. The canonical connection is an extension of the Chern connection \cite{Chern} on Hermitian manifolds. It was first introduced by Ehresmann-Libermann \cite{e}.

Geometers were used to use the Levi-Civita connection for the study of the geometry of almost Hermitian manifolds, see for example \cite{Goldberg,Gray1,Gray2,Gray3,AD}. However, later researches show that canonical connection is useful for the study of the geometry of almost Hermitian manifolds. For example, canonical connection is crucial for the study of the structure of nearly K\"ahler manifolds in \cite{Na1,Na2,B}. In \cite{twy}, Tossati, Weinkove and Yau used the canonical connection to solve the Calabi-Yau equation on almost K\"ahler manifolds. The problem Tossati-Weinkove-Yau considered is part of a program proposed by Donaldson \cite{Donaldson,D2} on sympletic topology. In \cite{vt}, Tossati obtained a Laplacian comparison result about the canonical connection  on almost Hermitian manifolds using the second variation of arc length and obtained a Schwartz lemma on almost Hermitian manifolds which is a generalization of the Schwartz lemma by Yau \cite{Yau}.

In this paper, by applying the same Bochner technique as in \cite{LW}, we obtain a Hessian comparison on almost Hermitian manifolds which generalises Tossati's Laplacian comparison \cite{vt}. More precisely, we obtain the following result.
\begin{thm}Let $(M,J,g)$ be a complete almost Hermitian manifold with holomorphic bisectional
curvature bounded from blow by $-K$ with $K\geq 0$, torsion bounded by $A_1$ and the (2,0) part of
the curvature tensor bounded by $A_2$. Then
\begin{equation}
\rho_{i\bj}\leq \left[\frac{1}{\rho}+\left((4\sqrt n+3)A_1^{2}+2A_2+K\right)^\frac{1}{2}\right]g_{i\bj}
\end{equation}
within the cut-locus of $o$.
\end{thm}
Moreover, with the same technique, we obtain the following sharp diameter estimate for almost Hermitian manifolds.
\begin{thm}
Let $(M,J,g)$ be a complete almost Hermitian manifold and the quasi holomorphic sectional
curvature is not less than $K>0$. Then $d(M)\leq \pi/\sqrt K$.
\end{thm}
In fact, the above diameter estimate was disguised with a seemingly different curvature assumption in \cite{Gray1}. However, one can show that the two curvature assumptions are the same by using the curvature identities derived in \cite{Yu3}. The same diameter estimate for Hermitian manifolds was also obtain in \cite{CY}.

 Furthermore, by using a similar technique as in Futaki \cite{F}, we have the following first eigenvalue estimate on almost Hermitian manifolds.
\begin{thm}
Let $(M,J,g)$ be a compact quasi K\"ahler manifold with the quasi Ricci curvature bounded
from below by a positive constant $K$. Then $\lam_1\geq 2K$, where $\lam_1$ is the first
eigenvalue of $(M,g)$.
\end{thm}

Finally, we obtain a sharp Hessian comparison on nearly K\"ahler manifolds which  generalizes some results in \cite{LW,TY2} on K\"ahler manifolds.
\begin{thm}
Let $(M,J,g)$ be a complete nearly K\"ahler manifold and $o$ be a fixed point in $M$. Let $B_o(R)$ be a geodesic ball within the cut-locus of $p$. Suppose that the quasi holomorphic bisectional curvature on $B_o(R)$ is not less than $K$ where $K$ is a constant. Then
\begin{equation}
 \rho_{\alpha\bar\beta}\leq\left\{\begin{array}{ll}
 \sqrt{K/2}\cot(\sqrt{K/2}\rho)(g_{\alpha\bar\beta}-2\rho_\alpha \rho_{\bar\beta})+\sqrt{2K}\cot(\sqrt{2K}\rho)\rho_\alpha \rho_{\bar\beta}&(K>0)\\
 \frac{1}{r}(g_{\alpha\bar\beta}-\rho_\alpha \rho_{\bar\beta})&(K=0)\\
 \sqrt{-K/2}\coth(\sqrt{-K/2}\rho)(g_{\alpha\bar\beta}-2\rho_\alpha \rho_{\bar\beta})+\sqrt{-2K}\coth(\sqrt{-2K}\rho)\rho_\alpha \rho_{\bar\beta}&(K<0)\\
 \end{array}\right.
 \end{equation}
 in $B_o(R)$ with equality holds all over $B_o(R)$ if and only if $B_o(R)$ is holomorphic and isometric equivalent to the geodesic ball with radius $R$ in the K\"ahler space form of constant holomorphic bisectional curvature $K$, where $\rho$ is the distance function to the fixed point $o$.
\end{thm}

\section{Hessian comparison and diameter estimate on almost Hermitian manifolds}
 we first recall some definitions and known results in almost Hermitian geometry. 
 
 \begin{defn}[\cite{k,k2,g}] Let $(M,J)$ be an almost complex manifold. A Riemannian metric $g$ on $M$ such that $g(JX,JY)=g(X,Y)$ for any two tangent vectors $X$ and $Y$ is called an almost Hermitian metric.  The triple $(M,J,g)$ is called an almost Hermitian manifold. The two form $\omega_g=g(JX,Y)$ is called the fundamental form of the almost Hermitian manifold. A connection $\nabla$ on an almost Hermitian manifold $(M,J,g)$ such that $\nabla g=0$ and $\nabla J=0$ is called an almost Hermitian connection.
\end{defn}

Note  that the torsion $\tau$ of the connection $\nabla$ is a vector-valued two form defined as
\begin{equation}
\tau(X,Y)=\nabla_XY-\nabla_YX-[X,Y].
\end{equation}
 An almost Hermitian connection is uniquely determined by its (1,1)-part. In particular, there is a unique almost Hermitian connection with vanishing (1,1)-part. Such a connection is called the canonical connection which is first introduced by Ehresman and Libermann \cite{e}.
\begin{defn}[\cite{k,k2}]The unique almost Hermitian connection $\nabla$ on an almost Hermitian manifold $(M,J,g)$ with vanishing $(1,1)$-part of the torsion is called the canonical connection of the almost Hermitian manifold.
\end{defn}

For sake of convenience, we adopt the following conventions in the remaining part of this paper:
\begin{enumerate}
\item Without further indications, the manifold is of real dimension $2n$;
\item $D$ denotes the Levi-Civita connection and $R^L$ denotes its curvature tensor and ''$,$'' means taking covariant derivatives with respect to $D$;
\item $\nabla$ denotes the canonical connection,$R$ denote the curvature tensor of $\nabla$ and ''$;$'' means taking covariant derivatives with respect to $\na$.
\item Without further indications, English letters such as $a,b,c$ etc  denote indices in $\{1,\bar1,2,\bar 2,\cdots,n,\bar n\}$;
\item Without further indications, $i,j,k$ etc denote indices in $\{1,2,\cdots,n\}$.
\item Without further indications, Greek letters such as $\lambda,\mu$ denote summation indices going through $\{1,2,\cdots,n\}$.
\end{enumerate}
Recall the definition of curvature operator:
\begin{equation}
R(X,Y)Z=\nabla_X\nabla_YZ-\nabla_Y\nabla_XZ-\nabla_{[X,Y]}Z.
\end{equation}
The curvature tensor is defined as
\begin{equation}
R(X,Y,Z,W)=\vv<R(Z,W)X,Y>.
\end{equation}
Fixed a unitary $(1,0)$-frame $(e_1,e_2,\cdots,e_n)$, since $\nabla J=0$, we have
\begin{equation}
R_{ijab}=R_{i\ ab}^{\ \bj}=0
\end{equation}
for all indices $i,j$ and $a,b$. Moreover, similarly as in the Riemannian case, we have the following symmetries of the curvature tensor:
\begin{equation}
R_{abcd}=-R_{bacd}=-R_{abdc}
\end{equation}
for all indices $a,b,c$ and $d$. Recall that $R'_{ab}=g^{\bmu\lam}R_{\lam\bmu ab}$ and  $R''_{i\bj}=g^{\bmu\lam}R_{i\bj\lam\bmu}$ are called the first and the second Ricci curvature of the almost Hermitian metric $g$ respectively.

The following first Bianchi identities for almost Hermitian manifolds are frequently used in the computations of the remaining part of this paper. One can find them in \cite{twy,k,Yu3}.
\begin{prop}\label{prop-first-bian}
Let $(M,J,g)$ be an almost Hermitian manifold. Fixed a unitary frame, we have
\begin{enumerate}
\item $R_{i\bj k\bar l}-R_{k\bj i\bar l}=\tau^j_{ik;\bar
l}-\tau^{\bar \lambda}_{ik}\tau^j_{\bar l\bar\lambda}$;
\item $R_{i\bj  k\bl }-R_{i \bl k\bj }=\tau^\bi_{\bj\bl;
k}-\tau^\bi_{k\lambda}\tau^{\lambda}_{\bj\bl}$;
\item $R_{i\bj k\bl}-R_{k\bl i\bj}=\tau_{ik;\bj}^l+\tau_{\bj\bl;k}^\bi-\tau_{k\lam}^\bi\tau_{\bj\bl}^\lam-\tau_{ik}^\bla\tau_{\bj\bla}^l$;
\item $R_{i\bj kl}=-\tau_{kl;\bj}^\bi+\tau_{\bj\bla}^\bi\tau_{kl}^\bla$.
\end{enumerate}
\end{prop}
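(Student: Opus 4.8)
The plan is to obtain all four identities as specializations of the general first Bianchi identity for a linear connection with torsion. For any connection with curvature operator $R$ and torsion $T$ as defined in the excerpt, one has the Kobayashi--Nomizu formula
\begin{equation}
\sum_{\text{cyc}}R(X,Y)Z=\sum_{\text{cyc}}\left[(\na_XT)(Y,Z)+T(T(X,Y),Z)\right],
\end{equation}
where $\sum_{\text{cyc}}$ is the cyclic sum over $X,Y,Z$ and $(\na_XT)(Y,Z)=\na_X(T(Y,Z))-T(\na_XY,Z)-T(Y,\na_XZ)$. I would first record this formula, then feed in frame vectors of prescribed type for $X,Y,Z$ and take the inner product against one remaining conjugate frame vector to extract a single scalar equation. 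Three structural facts do all the work: $\na$ preserves type (since $\na J=0$), so $\na_{e_a}e_b$ stays $(1,0)$ and $\na_{e_a}\bar e_b$ stays $(0,1)$; the $(1,1)$-part of $\tau$ vanishes, so $\tau(e_a,\bar e_b)=0$; and the type restriction $R_{ijab}=0$ together with its conjugate $R_{\bi\bj ab}=0$ (equivalently, $R$ preserves type). I will also use throughout that in a unitary frame $\langle V,\bar e_j\rangle$ extracts the $(1,0)$-component $V^j$ while $\langle V,e_i\rangle$ extracts the $(0,1)$-component $V^{\bi}$.

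For (1) I take $X=e_i,\,Y=e_k,\,Z=\bar e_l$ and pair the identity with $\bar e_j$. On the curvature side the term $\langle R(e_i,e_k)\bar e_l,\bar e_j\rangle=R_{\bl\bj ik}$ vanishes by $R_{\bi\bj ab}=0$, leaving exactly $R_{i\bj k\bl}-R_{k\bj i\bl}$. On the torsion side, type-preservation kills the two covariant-derivative terms built on the $(1,1)$-torsions $\tau(e_k,\bar e_l)$ and $\tau(\bar e_l,e_i)$, leaving $(\na_{\bar e_l}\tau)(e_i,e_k)$ whose $\bar e_j$-pairing is $\tau^j_{ik;\bl}$; the only surviving quadratic term is $T(T(e_i,e_k),\bar e_l)$, which survives only through the $(0,1)$-part $\tau^{\bmu}_{ik}$ and, after $\tau^j_{\bmu\bl}=-\tau^j_{\bl\bmu}$ and relabeling, becomes $-\tau^{\bla}_{ik}\tau^j_{\bl\bla}$. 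This is precisely (1). Identity (2) is the mirror computation: take $X=\bar e_j,\,Y=\bar e_l,\,Z=e_k$ and pair with $e_i$; one curvature term dies by $R_{ijab}=0$, the other two assemble via $R_{abcd}=-R_{bacd}=-R_{abdc}$ into $R_{i\bj k\bl}-R_{i\bl k\bj}$, and the surviving torsion terms give $\tau^{\bi}_{\bj\bl;k}-\tau^{\bi}_{k\lam}\tau^{\lam}_{\bj\bl}$. For (4) I take $X=e_k,\,Y=e_l,\,Z=\bar e_j$ and pair with $e_i$; now \emph{two} of the three curvature terms vanish by $R_{ijab}=0$, so the curvature side collapses to the single term $-R_{i\bj kl}$, while the torsion side yields $\tau^{\bi}_{kl;\bj}-\tau^{\bi}_{\bj\bla}\tau^{\bla}_{kl}$, giving (4) after transposing the sign.

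Finally, (3) is not a fresh computation but an assembly of the first two. Relabeling (1) by interchanging the barred indices $j\leftrightarrow l$ gives
\begin{equation}
R_{i\bl k\bj}-R_{k\bl i\bj}=\tau^l_{ik;\bj}-\tau^{\bla}_{ik}\tau^l_{\bj\bla},
\end{equation}
and adding this to (2), namely $R_{i\bj k\bl}-R_{i\bl k\bj}=\tau^{\bi}_{\bj\bl;k}-\tau^{\bi}_{k\lam}\tau^{\lam}_{\bj\bl}$, the intermediate term $R_{i\bl k\bj}$ cancels and the two right-hand sides combine into $\tau^l_{ik;\bj}+\tau^{\bi}_{\bj\bl;k}-\tau^{\bi}_{k\lam}\tau^{\lam}_{\bj\bl}-\tau^{\bla}_{ik}\tau^l_{\bj\bla}$, which is exactly (3).

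The conceptual content is entirely standard, so the genuine difficulty is purely the type bookkeeping: deciding correctly which of the three cyclic curvature terms survive and which collapse under $R_{ijab}=0$, which covariant-derivative and quadratic-torsion terms are killed by the vanishing of the $(1,1)$-torsion, and then relabeling dummy indices and invoking the antisymmetry of $\tau$ so that every surviving term lands in precisely the index slot written in (1)--(4). The inner-product conventions are what pin down the upper indices on the torsion terms, so verifying in each of the three assignments that pairing with $\bar e_j$ extracts $V^j$ and pairing with $e_i$ extracts $V^{\bi}$ is the step I would check most carefully.
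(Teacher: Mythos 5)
Your proof is correct, and it is worth noting at the outset that the paper itself offers no proof of this proposition at all---it defers entirely to the references \cite{twy,k,Yu3}---so your argument supplies a self-contained derivation rather than paralleling anything in the text. Checking it against the paper's conventions, everything lines up. With $R(X,Y,Z,W)=\langle R(Z,W)X,Y\rangle$, your assignment $(X,Y,Z)=(e_i,e_k,\bar e_l)$ paired with $\bar e_j$ produces the curvature terms $R_{\bl\bj ik}$ (which vanishes, being the conjugate of the stated $R_{ijab}=0$), $R_{i\bj k\bl}$, and $R_{k\bj\bl i}=-R_{k\bj i\bl}$, exactly as you claim; the vanishing of the $(1,1)$-part of $\tau$, together with type preservation by $\nabla$ (from $\nabla J=0$), kills precisely the two covariant-derivative terms and two of the three quadratic terms you discard, and the survivor $T(T(e_i,e_k),\bar e_l)$ contributes $-\tau^{\bla}_{ik}\tau^j_{\bl\bla}$ after the antisymmetry and relabeling you perform. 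The one place a sign could plausibly go wrong is in (4), where the surviving curvature term $\langle R(e_k,e_l)\bar e_j,e_i\rangle=R_{\bj ikl}$ does \emph{not} vanish on type grounds but instead equals $-R_{i\bj kl}$ via $R_{abcd}=-R_{bacd}$; you handled this correctly, and the resulting $-R_{i\bj kl}=\tau^{\bi}_{kl;\bj}-\tau^{\bi}_{\bj\bla}\tau^{\bla}_{kl}$ transposes to the stated identity. Your derivation of (3) by relabeling the free barred indices $j\leftrightarrow l$ in (1) and adding (2), cancelling the intermediate $R_{i\bl k\bj}$, is likewise legitimate and matches the stated formula term by term. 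In substance, specializing the Kobayashi--Nomizu first Bianchi identity with torsion by type, using $\nabla J=0$, the vanishing $(1,1)$-torsion, and $R_{ijab}=0$, is the same mechanism used in the sources the paper cites, so what your write-up buys is not a different theorem-proving strategy but a complete, verifiable argument in the paper's own conventions, with the bookkeeping (which cyclic terms survive, and which pairing extracts which upper index) made explicit.
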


The following general Ricci identity for commuting indices is also frequently used in the remaining part of this paper. One can also find it in \cite{FTY}.
\begin{lem}\label{lem-ricci-identity}
Let $M^n$ be a smooth manifold, and $E$ be a vector bundle on $M$. Let $D$ be a connection on $E$ and $\nabla$ be a connection on $M$ with torsion $\tau$. Then
$$D^2s(X,Y)-D^2s(Y,X)=-R(X,Y)s+D_{\tau(X,Y)}s$$
for any cross section $s$ of $E$, and tangent vector fields $X$ and $Y$.
\end{lem}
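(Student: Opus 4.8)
The identity is a formal consequence of unwinding the definition of the second covariant derivative, so the plan is a direct computation from first principles rather than anything requiring a clever idea. Recall that $Ds$ is an $E$-valued one-form, and that $D^2s$ is by definition the covariant derivative of $Ds$ with respect to the tensor-product connection on $E\otimes T^*M$ built from $D$ on the $E$-factor and from $\nabla$ acting on the cotangent factor. Applying the Leibniz rule for this tensor connection and evaluating on $X,Y$ (with the ordering convention, consistent with the paper's index notation such as $s_{;i;j}$, that the first argument records the inner covariant derivative and the second the outer one) gives
\[
D^2s(X,Y)=D_Y(D_Xs)-D_{\nabla_YX}s .
\]
First I would write this out together with the analogous expression for $D^2s(Y,X)$ and subtract.

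Next I would antisymmetrize and group the difference into two pieces: the iterated-derivative part $D_YD_Xs-D_XD_Ys$ and the connection-correction part $-\bigl(D_{\nabla_YX}-D_{\nabla_XY}\bigr)s=-D_{\nabla_YX-\nabla_XY}s$. For the first piece I would invoke the curvature operator of $D$ on $E$, namely $R(X,Y)s=D_XD_Ys-D_YD_Xs-D_{[X,Y]}s$, so that $D_YD_Xs-D_XD_Ys=-R(X,Y)s-D_{[X,Y]}s$. For the second piece I would use the definition of the torsion $\tau(X,Y)=\nabla_XY-\nabla_YX-[X,Y]$ together with its antisymmetry to write $\nabla_YX-\nabla_XY=-[X,Y]-\tau(X,Y)$, whence $-D_{\nabla_YX-\nabla_XY}s=D_{[X,Y]}s+D_{\tau(X,Y)}s$. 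Adding the two pieces, the two copies of $D_{[X,Y]}s$ cancel identically and one is left with exactly $-R(X,Y)s+D_{\tau(X,Y)}s$, which is the claimed identity.

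The only place where care is genuinely required, and hence the main obstacle, is the bookkeeping of sign and ordering conventions: the ordering convention in $D^2s(X,Y)$ (which of $X,Y$ is the inner derivative), the sign convention for the curvature operator $R$ of $D$ as fixed by the formula for $R(X,Y)Z$ recalled in the excerpt, and the antisymmetry $\tau(X,Y)=-\tau(Y,X)$; once these are pinned down all the signs are forced. I would also note that no special choice of local frame, nor any special extension of $X,Y$ to vector fields, is needed, since the Lie-bracket terms cancel for arbitrary vector fields, so the computation above is valid as written. For completeness I would finally check that both sides are $C^\infty(M)$-linear in $X$, $Y$ and $s$ — the Leibniz terms produced by rescaling cancel under antisymmetrization — which confirms that the identity is a genuine pointwise tensor identity and that it suffices to verify it after extending the data arbitrarily to a neighborhood. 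There is no substantive analytic or geometric difficulty; the content lies entirely in correctly identifying the second covariant derivative and the two connection-induced terms.
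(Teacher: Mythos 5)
Your proof is correct and complete. The paper itself offers no proof of this lemma at all --- it simply cites \cite{FTY} --- so your direct computation fills in an argument the paper delegates to a reference, and it is the standard one: expand $D^2s(X,Y)=D_Y(D_Xs)-D_{\nabla_YX}s$ via the tensor-product connection on $E\otimes T^*M$, antisymmetrize, absorb the iterated-derivative part into the curvature of $D$ and the correction part into the torsion of $\nabla$, and watch the two $D_{[X,Y]}s$ terms cancel. The one genuinely delicate point, which you correctly identified and resolved, is the ordering convention: with the opposite convention $D^2s(X,Y)=D_X(D_Ys)-D_{\nabla_XY}s$ the same computation yields $+R(X,Y)s-D_{\tau(X,Y)}s$, so the stated form of the lemma forces the first argument to be the inner derivative, and this is indeed consistent with how the paper uses the lemma (e.g.\ its consequence $f_{ij}-f_{ji}=\tau_{ij}^{\lambda}f_{\lambda}+\tau_{ij}^{\bar\lambda}f_{\bar\lambda}$, which your convention reproduces on taking $E$ trivial and $R=0$). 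Your closing remarks on tensoriality and frame-independence are correct but strictly optional, since the bracket terms cancel for arbitrary vector fields, as you note.
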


Directly by the Ricci identity above, we have
\begin{equation}
f_{i\bj}=f_{\bj i}
\end{equation}
and
\begin{equation}
f_{ij}-f_{ji}=\tau_{ij}^\lam f_\lam+\tau_{ij}^\bla f_\bla
\end{equation}
on almost Hermitian manifolds.

Moreover, recall the following comparisons of geometric quantities for the Levi-Civita connection and the canonical connection on almost Hermitian manifolds.

\begin{lem}[\cite{g,twy,FTY}]\label{lem-comp-connection}
Let $(M,J,g)$ be an almost Hermitian manifold. Then
$$\vv<D_YX-\nabla_Y X,Z>=\frac{1}{2}\cs{\vv<\tau(X,Y),Z>+\vv<\tau(Y,Z),X>-\vv<\tau(Z,X),Y>}.$$
\end{lem}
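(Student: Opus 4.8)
The plan is to prove this purely algebraically, as the standard contorsion computation comparing two metric connections. First I would introduce the difference tensor $B(X,Y)=D_XY-\na_XY$. Because $D$ and $\na$ are both genuine connections, their difference is $C^\infty(M)$-linear in each argument, so $B$ is a well-defined $(1,2)$-tensor; this reduces the claim to a pointwise identity on arbitrary tangent vectors and frees me from having to differentiate the arguments.

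Next I would extract the two structural constraints that determine $B$. Metric compatibility of both connections, $Dg=0$ and $\na g=0$, gives upon subtraction $\vv<B(X,Y),Z>+\vv<B(X,Z),Y>=0$, so $B$ is antisymmetric in its last two slots against the metric. The torsion conditions---that $D$ is torsion-free while $\na$ has torsion $\tau$---give upon subtraction $B(X,Y)-B(Y,X)=-\tau(X,Y)$. These two identities are precisely the data needed to solve for $B$.

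The core step is the cyclic-permutation trick. Writing $b(X,Y,Z)=\vv<B(X,Y),Z>$ and $t(X,Y,Z)=\vv<\tau(X,Y),Z>$, I would record the torsion relation for the three cyclic orderings of $(X,Y,Z)$, then use the metric-antisymmetry (which swaps the last two arguments with a sign) to rewrite every term so the unknown always appears in one fixed slot pattern. Forming the combination (first) $+$ (third) $-$ (second) of the rewritten equations, two of the six unknown terms cancel and the rest collapse to $2b(X,Y,Z)$, giving $b(X,Y,Z)=\frac{1}{2}\cs{-t(X,Y,Z)-t(Z,X,Y)+t(Y,Z,X)}$. Finally I would relabel to the ordering $B(Y,X)$ appearing in the statement and apply $\tau(X,Y)=-\tau(Y,X)$ to flip the three signs, arriving at $\vv<D_YX-\na_YX,Z>=\frac{1}{2}\cs{\vv<\tau(X,Y),Z>+\vv<\tau(Y,Z),X>-\vv<\tau(Z,X),Y>}$.

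There is no analytic obstacle here; the only thing to watch is the bookkeeping of signs and slot positions. In particular I would keep the metric-antisymmetry (acting on the last two arguments) strictly separate from the torsion-antisymmetry (acting on the first two), and take care to match the left-hand side $D_YX-\na_YX$ of the statement rather than the more symmetric $D_XY-\na_XY$ that the cyclic computation produces first.
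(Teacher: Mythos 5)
Your proof is correct, and there is in fact no internal proof to compare it against: the paper states this lemma as a recalled result, citing \cite{g,twy,FTY}, and gives no argument of its own. Your contorsion computation is the standard proof found in those sources, and the details check out under the paper's conventions: with the torsion defined by $\tau(X,Y)=\nabla_XY-\nabla_YX-[X,Y]$ and $D$ the (torsion-free, metric) Levi-Civita connection, subtracting the two metric-compatibility identities gives $b(X,Y,Z)+b(X,Z,Y)=0$ for $b(X,Y,Z)=\langle B(X,Y),Z\rangle$, and subtracting the torsion identities gives $B(X,Y)-B(Y,X)=-\tau(X,Y)$, exactly as you state. Your cyclic combination (first) $+$ (third) $-$ (second) then collapses correctly: the antisymmetry in the last two slots kills the $Y$-first and $Z$-first terms and doubles the $X$-first term, yielding $2b(X,Y,Z)=-t(X,Y,Z)-t(Z,X,Y)+t(Y,Z,X)$, and the final relabeling to $B(Y,X)$ together with the antisymmetry of $\tau$ in its first two arguments flips the three signs to give precisely $\langle D_YX-\nabla_YX,Z\rangle=\frac{1}{2}\left(\langle\tau(X,Y),Z\rangle+\langle\tau(Y,Z),X\rangle-\langle\tau(Z,X),Y\rangle\right)$, matching the statement. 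Your closing caution about keeping the two antisymmetries separate is exactly the right bookkeeping point; the argument is complete and self-contained.
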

By using Lemma \ref{lem-comp-connection} directly, we have the following comparisons of the Hessian and divergence operators with respect to the Levi-Civita connection and the canonical connection.
\begin{lem}\label{lem-comp-hess}
On an almost Hermitian manifold, fixed a unitary frame,
\begin{equation}
f_{i\bj}-f_{,i\bj}=\frac{1}{2}(\tau_{i\lam}^jf_\bla +\tau_{\bj\bla}^\bi f_\lam)
\end{equation}
where ',' means taking covariant derivatives with respect to the Levi-Civita connection.
\end{lem}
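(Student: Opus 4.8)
The plan is to reduce everything to Lemma \ref{lem-comp-connection} by recalling that the Hessian of $f$ with respect to a connection is the covariant derivative of $df$, so the two Hessians differ only through the two connections' action on the frame. Concretely, for the canonical connection $f_{i\bj}=e_i(e_\bj f)-\vv<\na_{e_i}e_\bj,\ \cdot\ >$-type expression, and for the Levi-Civita connection $f_{,i\bj}=e_i(e_\bj f)-(D_{e_i}e_\bj)f$, so the first-order terms $e_i(e_\bj f)$ cancel and one is left with
\begin{equation}
f_{i\bj}-f_{,i\bj}=\cs{D_{e_i}e_\bj-\na_{e_i}e_\bj}f=df\cs{D_{e_i}e_\bj-\na_{e_i}e_\bj}.
\end{equation}
Thus the whole computation is about evaluating $df$ on the single vector $V:=D_{e_i}e_\bj-\na_{e_i}e_\bj$, which Lemma \ref{lem-comp-connection} expresses purely in terms of the torsion $\tau$.

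Next I would recover the components of $V$ in the unitary frame. Since the frame is unitary, $\vv<e_\mu,e_\bla>=g_{\mu\bla}=\delta_{\mu\lam}$ while $\vv<e_\mu,e_\lam>=\vv<e_\bmu,e_\bla>=0$, so for any vector $W$ one has $W(f)=df(W)=\vv<W,e_\bla>f_\lam+\vv<W,e_\lam>f_\bla$ (summation over $\lam$). Applying this to $W=V$ reduces the claim to computing the two pairings $\vv<V,e_\bla>$ and $\vv<V,e_\lam>$. For each I would substitute $X=e_\bj$, $Y=e_i$ into Lemma \ref{lem-comp-connection}, taking $Z=e_\bla$ and $Z=e_\lam$ respectively, which gives each pairing as one half of a sum of three torsion terms.

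The simplification then uses the defining property of the canonical connection, namely that its $(1,1)$-torsion vanishes: $\tau(e_i,e_\bj)=0$. This kills every mixed-type term, so in $\vv<V,e_\bla>$ only $-\tfrac12\vv<\tau(e_\bla,e_\bj),e_i>=-\tfrac12\tau^\bi_{\bla\bj}=\tfrac12\tau^\bi_{\bj\bla}$ survives (the last equality by antisymmetry of $\tau$), and in $\vv<V,e_\lam>$ only $\tfrac12\vv<\tau(e_i,e_\lam),e_\bj>=\tfrac12\tau^j_{i\lam}$ survives. Substituting back yields
\begin{equation}
f_{i\bj}-f_{,i\bj}=\frac12\tau^j_{i\lam}f_\bla+\frac12\tau^\bi_{\bj\bla}f_\lam,
\end{equation}
which is the asserted identity. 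The step most prone to error—and hence the one I would carry out most carefully—is the bookkeeping: correctly extracting frame components under the $\C$-bilinear pairing of a unitary frame (which index must be conjugated, via $\tau^k_{ab}=\vv<\tau(e_a,e_b),e_\bk>$ and $\tau^\bk_{ab}=\vv<\tau(e_a,e_b),e_k>$), deciding which of the three torsion terms vanish by the $(1,1)$-vanishing, and tracking the single antisymmetry sign that converts $\tau^\bi_{\bla\bj}$ into $\tau^\bi_{\bj\bla}$. There is no analytic difficulty here; it is a bounded tensorial calculation.
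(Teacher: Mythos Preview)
Your proposal is correct and is exactly the argument the paper has in mind: the paper states the lemma as an immediate consequence of Lemma~\ref{lem-comp-connection} (``By using Lemma~\ref{lem-comp-connection} directly''), and what you have written is precisely the unwinding of that sentence---reduce the difference of Hessians to $df$ evaluated on $D_{e_i}e_{\bj}-\na_{e_i}e_{\bj}$, apply Lemma~\ref{lem-comp-connection}, and use the vanishing of the $(1,1)$-part of $\tau$ to kill all but the two displayed torsion terms. The bookkeeping you flag (component extraction via $\tau^k_{ab}=\vv<\tau(e_a,e_b),e_{\bk}>$ and the single antisymmetry $\tau^{\bi}_{\bla\bj}=-\tau^{\bi}_{\bj\bla}$) is handled correctly.
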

\begin{lem}\label{lem-comp-hess-2}
On an almost Hermitian manifold, fixed a unitary frame,
\begin{equation}
f_{i j}-f_{,ij}=\frac{1}{2}(\tau_{ij}^\lam f_\lam+\tau_{ij}^\bla f_\bla+\tau_{i\lam}^\bj f_\bla +\tau_{j\lam}^\bi f_\bla)
\end{equation}
where ',' means taking covariant derivatives with respect to the Levi-Civita connection.
\end{lem}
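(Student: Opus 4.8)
\noindent The plan is to reduce the statement to the difference tensor of the two connections and then feed it into Lemma~\ref{lem-comp-connection}, following almost verbatim the proof of the $(1,1)$-type companion Lemma~\ref{lem-comp-hess}; the only change is that both free indices are now of type $(1,0)$, so a different family of torsion components survives.

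First I would observe that, by the very definition of the Hessian as a second covariant derivative, the two connections produce the same genuinely second-order term $e_j(e_i f)$, which therefore cancels in the difference, leaving a first-order expression involving only the difference of the two connections. With the index convention in which the last Hessian index is the differentiation direction, this reads
\begin{equation}
f_{ij}-f_{,ij}=\cs{D_{e_j}e_i-\na_{e_j}e_i}f.
\end{equation}
Writing $V:=D_{e_j}e_i-\na_{e_j}e_i$ and using that in a unitary frame $\vv<e_a,e_b>$ equals $1$ precisely when $a$ and $b$ are conjugate indices and vanishes otherwise, I would expand $V$ in the frame to obtain
\begin{equation}
f_{ij}-f_{,ij}=Vf=\sum_\lambda\vv<V,e_{\bla}>f_\lambda+\sum_\lambda\vv<V,e_\lambda>f_{\bla}.
\end{equation}

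Next I would apply Lemma~\ref{lem-comp-connection} with differentiation direction $Y=e_j$ and differentiated field $X=e_i$, giving
\begin{equation}
\vv<V,Z>=\frac12\cs{\vv<\tau(e_i,e_j),Z>+\vv<\tau(e_j,Z),e_i>-\vv<\tau(Z,e_i),e_j>},
\end{equation}
and then specialize $Z=e_{\bla}$ and $Z=e_\lambda$ in turn. Here the defining property of the canonical connection enters decisively: its $(1,1)$-part of torsion vanishes, i.e. $\tau(e_a,e_{\bb})=0$, so every term in which $\tau$ is evaluated on one holomorphic and one antiholomorphic vector drops out. For $Z=e_{\bla}$ only $\vv<\tau(e_i,e_j),e_{\bla}>=\tau_{ij}^\lambda$ survives, whereas for $Z=e_\lambda$ three $(2,0)$-type contributions remain, producing $\tau_{ij}^{\bla}$, $\tau_{j\lambda}^{\bi}$ and $-\tau_{\lambda i}^{\bj}$. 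Substituting these back into the frame expansion and using the antisymmetry $\tau_{ab}^c=-\tau_{ba}^c$ to rewrite $-\tau_{\lambda i}^{\bj}=\tau_{i\lambda}^{\bj}$ assembles exactly the claimed four terms.

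The computation is entirely routine; the only place demanding care—and the step most prone to error—is the type bookkeeping in the last step: correctly deciding which of the three torsion terms of Lemma~\ref{lem-comp-connection} are of mixed $(1,1)$-type (hence killed by the canonical connection) versus genuine $(2,0)$- or $(0,2)$-type, and tracking the signs introduced by the antisymmetry of $\tau$ and by the placement of the conjugate index in the unitary pairing. Since this is precisely the analogue, for two holomorphic free indices, of what was already carried out for one holomorphic and one antiholomorphic index in Lemma~\ref{lem-comp-hess}, I would simply transcribe that argument with the index types adjusted accordingly.
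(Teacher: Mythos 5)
Your proposal is correct and takes essentially the same route as the paper, which states this lemma (together with Lemma \ref{lem-comp-hess}) as following ``by using Lemma \ref{lem-comp-connection} directly''---your computation is precisely that direct application, with $X=e_i$, $Y=e_j$, $Z=e_\lam$ or $e_\bla$. Your type bookkeeping checks out: the $(1,1)$ torsion terms vanish for the canonical connection, and antisymmetry turns $-\tau_{\lam i}^{\bj}$ into $\tau_{i\lam}^{\bj}$, yielding exactly the four stated terms (and your convention, last Hessian index as differentiation direction, is the one consistent with the paper's identity $f_{ij}-f_{ji}=\tau_{ij}^\lam f_\lam+\tau_{ij}^\bla f_\bla$).
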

\begin{lem}\label{lem-comp-laplace}
On an almost Hermitian manifold, fixed a unitary frame,
\begin{equation}
\Delta f-\Delta^Lf=\tau_{i\lam}^if_\bla+\tau_{\bi\bla}^\bla f_\lam
\end{equation}
where $\Delta^L$ is the Laplacian operator with respect to the Levi-Civita connection.
\end{lem}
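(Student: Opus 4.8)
The plan is to realize both Laplacians as metric traces of the corresponding Hessians in a fixed unitary $(1,0)$-frame $(e_1,\dots,e_n)$ and then to contract the pointwise Hessian comparison of Lemma \ref{lem-comp-hess}. Since $g$ is $J$-invariant and the frame is unitary, the only nonvanishing components of the metric are $g_{i\bj}=\delta_{ij}$, so for either connection the trace of the Hessian over the complexified tangent bundle retains only the mixed components. For the canonical connection the Ricci identity $f_{i\bj}=f_{\bj i}$ recorded above yields
\begin{equation}
\Delta f=\sum_i\cs{f_{i\bi}+f_{\bi i}}=2\sum_i f_{i\bi},
\end{equation}
while the Levi-Civita connection is torsion free, hence its Hessian is symmetric, $f_{,i\bi}=f_{,\bi i}$, giving
\begin{equation}
\Delta^L f=\sum_i\cs{f_{,i\bi}+f_{,\bi i}}=2\sum_i f_{,i\bi}.
\end{equation}
Subtracting the two identities reduces the claim to the diagonal of the Hessian comparison, namely $\Delta f-\Delta^L f=2\sum_i\cs{f_{i\bi}-f_{,i\bi}}$.

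With this reduction in hand, I would set $j=i$ in Lemma \ref{lem-comp-hess}, which gives $f_{i\bi}-f_{,i\bi}=\tfrac12\cs{\tau_{i\lam}^i f_\bla+\tau_{\bi\bla}^\bi f_\lam}$, and then sum over $i$ (recalling that $\lam$ is a summation index). The prefactor $2$ from the reduction cancels the $\tfrac12$ coming from Lemma \ref{lem-comp-hess}, so the two surviving contractions are precisely the $(2,0)$- and $(0,2)$-traces of the torsion paired against the gradient of $f$, producing the asserted right-hand side $\tau_{i\lam}^i f_\bla+\tau_{\bi\bla}^\bi f_\lam$ (the second term being the conjugate trace, taken over the index $\bi$).

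The substance here lies in the reduction of the first paragraph rather than in the contraction itself: the main point requiring care is the bookkeeping that turns each Laplace--Beltrami-type operator into twice a sum of mixed Hessian components. Concretely, one must check that the pure components $g^{ij}$ and $g^{\bi\bj}$ drop out of the trace, and that the factor $2$ arises correctly---from the Ricci identity $f_{i\bj}=f_{\bj i}$ on the canonical side and from the symmetry of the torsion-free Levi-Civita Hessian on the other. Once this is verified the conclusion is immediate, being nothing more than the trace of Lemma \ref{lem-comp-hess}; no further curvature or Bianchi input is needed.
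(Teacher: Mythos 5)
Your argument is correct and coincides with the paper's own (implicit) proof: the paper gives no separate argument for Lemma \ref{lem-comp-laplace}, presenting it alongside Lemma \ref{lem-comp-hess} as a direct consequence of Lemma \ref{lem-comp-connection}, and tracing Lemma \ref{lem-comp-hess} exactly as you do---using $f_{i\bj}=f_{\bj i}$ for the canonical connection and the symmetry of the torsion-free Levi-Civita Hessian to write each Laplacian as $2\sum_i f_{i\bi}$, resp.\ $2\sum_i f_{,i\bi}$---is precisely that derivation. One remark: your right-hand side $\tau_{i\lam}^i f_\bla+\tau_{\bi\bla}^\bi f_\lam$ is the index-consistent form; in the paper's printed second term $\tau_{\bi\bla}^\bla f_\lam$ the summation index $\lam$ occurs three times, which is evidently a typo for the conjugate trace you obtained.
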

\begin{lem}\label{lem-comp-div}
Let $X$ be a vector field on an almost Hermitian manifold $M$ and fixed a unitary frame. Then
\begin{equation}
\mbox{div} X-\mbox{div}_LX=X^i\tau_{j i}^j+X^\bi\tau_{\bj\bi}^\bj
\end{equation}
where $\mbox{div}X=X^i_{;i}+X^{\bi}_{;\bi}$ is the divergence of $X$ with respect to the canonical connection and $\mbox{div}_L X$ is the divergence of $X$ with respect to the Levi-Civita connection.
\end{lem}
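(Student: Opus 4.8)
The plan is to recognize each divergence as the trace of the associated covariant-derivative endomorphism and then to trace the difference tensor supplied by Lemma \ref{lem-comp-connection}. Fix a unitary $(1,0)$-frame $(e_1,\dots,e_n)$ with conjugates $\bar e_1,\dots,\bar e_n$. By definition $\mbox{div}X=X^i_{;i}+X^\bi_{;\bi}$ is the trace of $Y\mapsto\nabla_YX$, and likewise $\mbox{div}_LX=X^i_{,i}+X^\bi_{,\bi}$ is the trace of $Y\mapsto D_YX$. Setting $S(Y):=D_YX-\nabla_YX$, which is tensorial in $Y$, we therefore have $\mbox{div}_LX-\mbox{div}X=\tr S$, and the entire problem reduces to tracing $S$, whose value is given pointwise by Lemma \ref{lem-comp-connection}.

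Next I would evaluate this trace in the unitary frame. Since the only nonzero pairings are $\vv<e_i,\bar e_j>=\delta_{ij}$, the trace splits into a holomorphic and an antiholomorphic part,
\[
\tr S=\sum_i\vv<S(e_i),\bar e_i>+\sum_i\vv<S(\bar e_i),e_i>.
\]
Into the first sum I substitute $Y=e_i$, $Z=\bar e_i$ in Lemma \ref{lem-comp-connection}, into the second $Y=\bar e_i$, $Z=e_i$, and I expand $X=X^ke_k+X^\bk\bar e_k$ in each resulting expression. Two facts then carry the computation. First, the canonical connection has vanishing $(1,1)$-part of torsion, so every mixed evaluation $\tau(e_i,\bar e_j)$ is zero; in particular the middle term $\vv<\tau(e_i,\bar e_i),X>$ drops out, and so do the mixed contributions hidden in $\tau(X,e_i)$ and $\tau(\bar e_i,X)$. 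Second, only the pure components $\tau_{ki}^i$ and $\tau_{\bi\bk}^\bi$ survive, and the antisymmetry $\tau_{ab}^c=-\tau_{ba}^c$ allows the two sums to be combined.

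Carrying out the bookkeeping yields $\tr S=X^k\tau_{ki}^i-X^\bk\tau_{\bi\bk}^\bi$, hence
\[
\mbox{div}X-\mbox{div}_LX=-X^k\tau_{ki}^i+X^\bk\tau_{\bi\bk}^\bi.
\]
A final relabeling of summation indices, together with the antisymmetry $\tau_{ij}^j=-\tau_{ji}^j$, rewrites this in the stated form $X^i\tau_{ji}^j+X^\bi\tau_{\bj\bi}^\bj$. As a consistency check, specializing to the gradient $X=\nabla f$, for which $X^i=f_\bi$ and $X^\bi=f_i$, recovers precisely the Laplacian comparison of Lemma \ref{lem-comp-laplace}.

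Since the argument is structurally identical to the Hessian and Laplacian comparisons already established, I expect the only genuine obstacle to be bookkeeping. The two delicate points are tracing correctly in a complex frame---so that one sums over both the holomorphic and antiholomorphic directions with the correct pairing $\vv<e_i,\bar e_j>=\delta_{ij}$ rather than over a real orthonormal basis---and keeping the antisymmetry and conjugation conventions consistent, so that the six raw terms produced by Lemma \ref{lem-comp-connection} collapse to the two on the right-hand side. Identifying and discarding the mixed-type terms using the vanishing $(1,1)$-torsion is the organizational key that makes this collapse transparent.
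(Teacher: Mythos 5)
Your proof is correct and takes essentially the same route as the paper, which states Lemma \ref{lem-comp-div} as a direct consequence of Lemma \ref{lem-comp-connection} without writing out the details; your computation---tracing the difference tensor $S(Y)=D_YX-\nabla_YX$ over both $e_i$ and $\bar e_i$ with the pairing $\vv<e_i,\bar e_j>=\delta_{ij}$, discarding mixed-type terms via the vanishing $(1,1)$-part of the torsion, and combining via the antisymmetry $\tau_{ij}^k=-\tau_{ji}^k$---is exactly that omitted calculation, and the signs and final relabeling to $X^i\tau_{ji}^j+X^\bi\tau_{\bj\bi}^\bj$ check out (as does your gradient consistency check against Lemma \ref{lem-comp-laplace}).
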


The same as in Tosatti \cite{vt}, we make the following definition about the bound-ness of the curvatures of an almost Hermitian manifold.
\begin{defn}
Let $(M,J,g)$ be an almost Hermitian manifold. We say that the holomorphic bisectional curvature of $(M,J,g)$ is bounded from below by $K$ if
\begin{equation}
R(X,\bar X,Y,\bar Y)\geq K \|X\|^2\|Y\|^2
\end{equation}
for any $X,Y\in T^{1,0}M$. We say that the torsion of $(M,J,g)$ is bounded by $A_1$ if
\begin{equation}
\|\tau(X,Y)\|\leq A_1\|X\|\|Y\|
\end{equation}
for any $X,Y\in T^{1,0}M$. We say that the (2,0) part of the curvature tensor of $(M,J,g)$ is bounded by $A_2$ if
\begin{equation}
|R(\bar X,Y,Y,X)|\leq A_2\|X\|^2\|Y\|^2
\end{equation}
for any $X,Y\in T^{1,0}M$.
\end{defn}

Let $(M,J,g)$ be an almost Hermitian manifold. We denote its distance function to fixed point $o$ as $\rho$. Similarly as in Li-Wang \cite{LW}, we have the follows.
\begin{lem}\label{lem-evolv-1-order}
Fixed a unitary frame $(e_1,e_2,\cdots,e_n)$, we have
\begin{equation}
\rho_{ij}\rho_\bi+\rho_i\rho_{\bi j}=0\ \mbox{and}\ \rho_{i\bj}\rho_\bi+\rho_i\rho_{\bi \bj}=0.
\end{equation}
\end{lem}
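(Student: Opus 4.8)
The plan is to obtain both identities by covariantly differentiating the eikonal equation $|\na\rho|^2=1$, which holds wherever $\rho$ is smooth, i.e.\ within the cut-locus of $o$, by the Gauss lemma. The entire argument rests on the observation that $|\na\rho|^2$ is a \emph{constant} function, so each of its covariant derivatives vanishes identically.

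First I would rewrite the unit-gradient condition in the fixed unitary frame. Since the only nonvanishing metric components are $g_{i\bj}=\delta_{ij}$, raising indices gives $\rho^i=\rho_\bi$ and $\rho^\bi=\rho_i$, and contracting $d\rho$ with itself yields
\begin{equation}
|\na\rho|^2=2\sum_\lam\rho_\lam\rho_\bla.
\end{equation}
As $\rho$ is the distance function to $o$, the left-hand side equals $1$, so $\sum_\lam\rho_\lam\rho_\bla\equiv\tfrac12$ within the cut-locus; the precise value of this constant is irrelevant for what follows.

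Next I would differentiate this constant with the canonical connection. Because $\na g=0$, metric contraction commutes with $\na$, so applying $\na_j$ (the ``$;j$'' derivative) and using the Leibniz rule gives
\begin{equation}
0=\sum_\lam\cs{\rho_{\lam j}\rho_\bla+\rho_\lam\rho_{\bla j}},
\end{equation}
where $\rho_{\lam j}=\rho_{\lam;j}$ and $\rho_{\bla j}=\rho_{\bla;j}$ are the usual second covariant derivatives. Relabeling the summation index $\lam\to i$ produces the first identity $\rho_{ij}\rho_\bi+\rho_i\rho_{\bi j}=0$. Applying $\na_\bj$ to the same constant instead yields $0=\sum_\lam(\rho_{\lam\bj}\rho_\bla+\rho_\lam\rho_{\bla\bj})$, which after relabeling becomes the second identity $\rho_{i\bj}\rho_\bi+\rho_i\rho_{\bi\bj}=0$.

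The only point demanding attention — and it is routine rather than a genuine obstacle — is the passage in the second step: one must check that differentiating the scalar $\sum_\lam\rho_\lam\rho_\bla$ produces no torsion or frame-connection terms beyond the two Leibniz terms. This is automatic, since $\na g=0$ and $\na J=0$ imply that the canonical connection preserves the type decomposition and is parallel with respect to $g_{i\bj}$, so the contraction passes through the derivative cleanly and the two displayed equations constitute the complete computation.
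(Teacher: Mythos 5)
Your proof is correct and takes essentially the same route as the paper: the paper likewise starts from $\rho_i\rho_\bi=\frac{1}{2}$ (the eikonal equation written in a unitary frame) and obtains both identities in one stroke by applying the Leibniz rule for the canonical connection to this constant, differentiating with respect to a general index $a\in\{j,\bar j\}$ rather than handling $j$ and $\bar j$ separately as you do. Your closing remark that no torsion terms can arise when differentiating a scalar product of first derivatives is accurate, though the paper leaves it implicit.
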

\begin{proof}
Note that $\rho_i\rho_\bi=\frac{1}{2}$. Hence
\begin{equation}
0=(\rho_i\rho_\bi)_a=\rho_{ia}\rho_\bi+\rho_i\rho_{\bi a}.
\end{equation}
\end{proof}
\begin{lem}\label{lem-evolv-2-order}
Fixed a unitary frame $(e_1,e_2,\cdots,e_n)$, we have
\begin{equation}
\begin{split}
&\rho_{k\bl i}\rho_\bi+\rho_{k\bl \bi}\rho_i\\
=&-\rho_{i\bl}\rho_{\bi k}-\rho_\bi\tau_{ik}^\lambda\rho_{\lambda\bl}-\rho_{k\bla}\tau_{\bi\bl}^{\bla}\rho_i-\rho_{ik}\rho_{\bi\bl}-\rho_{\lam k}\tau_{\bi\bl}^\lambda\rho_i-\rho_\bi\tau_{ik}^\bla\rho_{\bla\bl}\\
&-(R_{\bl\lambda ik}+\tau_{ik}^\bmu\tau_{\bl\bmu}^\bla)\rho_\bla\rho_\bi-(R_{k\bla\bi\bl}+\tau_{k\mu}^\lam\tau_{\bi\bl}^\mu)\rho_\lambda\rho_i-(R_{i\bla k\bl}+\tau_{k\mu}^\bi\tau_{\bla\bl}^\mu+\tau_{ik}^\bmu\tau_{\bl\bmu}^\lam)\rho_\lambda\rho_\bi\\
\end{split}
\end{equation}
\end{lem}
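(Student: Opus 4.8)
The plan is to obtain this Riccati-type identity by taking one further covariant derivative of the first-order relations in Lemma \ref{lem-evolv-1-order} and then commuting the resulting third covariant derivatives into the radial form $\rho_{k\bl i}\rho_\bi+\rho_{k\bl\bi}\rho_i$ that stands on the left-hand side. Concretely, I would start from the eikonal identity $\rho_m\rho_\bm=\frac12$ and differentiate it first in the direction $\bl$ and then in the direction $k$, which gives
\begin{equation}
\rho_{m\bl k}\rho_\bm+\rho_m\rho_{\bm\bl k}=-\rho_{m\bl}\rho_{\bm k}-\rho_{mk}\rho_{\bm\bl}.
\end{equation}
After relabeling the summation index $m\to i$, the two quadratic terms on the right are already the terms $-\rho_{i\bl}\rho_{\bi k}$ and $-\rho_{ik}\rho_{\bi\bl}$ appearing in the statement. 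It therefore remains to rewrite the two third-order terms on the left as $\rho_{k\bl i}\rho_\bi$ and $\rho_{k\bl\bi}\rho_i$ together with correction terms, and it is precisely in these commutations that the torsion and curvature terms are produced.

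For the first term I would move the differentiation index $k$ past $\bl$ and then interchange the two holomorphic indices, so as to turn $\rho_{i\bl k}$ into $\rho_{k\bl i}$. Commuting the mixed pair $(\bl,k)$ costs only curvature, since the $(1,1)$-part of the torsion of the canonical connection vanishes; interchanging the holomorphic pair, via the second-order Ricci identity $f_{ij}-f_{ji}=\tau_{ij}^\lam f_\lam+\tau_{ij}^\bla f_\bla$ differentiated once more, produces both a torsion times Hessian contribution (giving $-\rho_\bi\tau_{ik}^\lam\rho_{\lam\bl}$ and $-\rho_\bi\tau_{ik}^\bla\rho_{\bla\bl}$) and covariant-derivative-of-torsion contributions $\tau_{ik;\bl}^\lam$ and $\tau_{ik;\bl}^\bla$. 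These last terms are not in the desired form, so I would eliminate them using the first Bianchi identities (parts (1) and (4)) of Proposition \ref{prop-first-bian}, which trade $\na\tau$ for curvature plus a quadratic-torsion term; this is exactly what produces, after contraction with the first-order factors $\rho_\bla\rho_\bi$ and $\rho_\lam\rho_\bi$ and repeated use of the symmetries $R_{abcd}=-R_{bacd}=-R_{abdc}$ and $R_{ijab}=0$, the grouped expressions $R_{\bl\lam ik}+\tau_{ik}^\bmu\tau_{\bl\bmu}^\bla$ and $R_{i\bla k\bl}+\tau_{k\mu}^\bi\tau_{\bla\bl}^\mu+\tau_{ik}^\bmu\tau_{\bl\bmu}^\lam$.

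The second term $\rho_m\rho_{\bm\bl k}$ I would treat symmetrically. Using $f_{i\bj}=f_{\bj i}$ I first bring the two barred indices adjacent and move the mixed derivative inward, and then apply Lemma \ref{lem-ricci-identity} to commute the antiholomorphic pair; this time the $(0,2)$-torsion survives and yields exactly the terms $-\rho_{k\bla}\tau_{\bi\bl}^\bla\rho_i$ and $-\rho_{\lam k}\tau_{\bi\bl}^\lam\rho_i$, while the accompanying curvature contribution, contracted with $\rho_\lam\rho_i$, produces the remaining grouped term $R_{k\bla\bi\bl}+\tau_{k\mu}^\lam\tau_{\bi\bl}^\mu$. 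Collecting the three curvature-plus-torsion blocks together with the six lower-order terms then gives the asserted identity.

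The conceptual content is simply the complexified, torsion-corrected Riccati equation for the Hessian of the distance function, but the bulk of the work, and the main obstacle, is purely the bookkeeping: each interchange of covariant derivatives via Lemma \ref{lem-ricci-identity} creates a curvature term acting on a covector index whose precise components must be computed, and every resulting $\na\tau$ term must be converted through the correct Bianchi identity in Proposition \ref{prop-first-bian} so that no bare derivative of torsion survives and the curvature indices land in exactly the positions $R_{\bl\lam ik}$, $R_{k\bla\bi\bl}$, $R_{i\bla k\bl}$ stated. Tracking the signs through these several commutations, and checking that the stray curvature terms from the mixed interchanges cancel against the Bianchi output so that the two halves combine into the single radial derivative on the left, is the delicate point.
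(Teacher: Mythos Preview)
Your plan is correct and is essentially the same computation the paper carries out: start from $\rho_i\rho_\bi=\tfrac12$, differentiate twice, and then use Lemma~\ref{lem-ricci-identity} together with the first Bianchi identities of Proposition~\ref{prop-first-bian} to commute the third-order derivatives into radial form and to trade every $\nabla\tau$ for curvature plus quadratic torsion. The only difference is the order of the two differentiations: the paper applies $\partial_k$ first and then $\partial_{\bl}$, obtaining $\rho_{ik\bl}\rho_{\bi}+\rho_{\bi k\bl}\rho_i$ directly, whereas you differentiate in the order $\partial_{\bl}$ then $\partial_k$, which forces an extra commutation of the outer mixed pair $(\bl,k)$ on each third-order term before you can proceed. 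That extra step is harmless---the additional curvature contributions it produces are absorbed when you apply the Bianchi identities, exactly the ``stray curvature'' cancellation you anticipate---but the paper's ordering avoids it and makes the bookkeeping slightly shorter. Either route yields the stated identity.
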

\begin{proof}
Note that $\rho_i\rho_\bi=\frac{1}{2}$. Hence
\begin{equation}\label{eqn-2-order}
\begin{split}
0=&(\rho_i\rho_\bi)_{k\bl}\\
=&\rho_{ik}\rho_{\bi\bl}+\rho_{i\bl}\rho_{\bi k}+\rho_{ik\bl}\rho_\bi+\rho_{\bi k\bl}\rho_i\\
=&\rho_{ik}\rho_{\bi\bl}+\rho_{i\bl}\rho_{\bi k}+\cs{\rho_{ki}+\tau_{ik}^\lam\rho_\lam+\tau_{ik}^\bla\rho_\bla}_\bl\rho_\bi+\rho_{k\bi\bl}\rho_i\\
=&\rho_{ik}\rho_{\bi\bl}+\rho_{i\bl}\rho_{\bi k}+\cs{\rho_{ki\bl}+\tau_{ik;\bl}^\lam\rho_\lam+\tau_{ik}^\lam\rho_{\lam\bl}+\tau_{ik;\bl}^\bla\rho_\bla+\tau_{ik}^\bla\rho_{\bla\bl}}\rho_\bi\\
&+\cs{\rho_{k\bl\bi}+R_{k\bla\bi\bl}\rho_\lambda+\tau_{\bi\bl}^\lambda\rho_{k\lambda}+\tau_{\bi\bl}^\bla\rho_{k\bla}}\rho_i\\
=&\rho_{ik}\rho_{\bi\bl}+\rho_{i\bl}\rho_{\bi k}+(\rho_{k\bl i}+R_{k\bla i\bl}\rho_\lambda+\tau_{ik;\bl}^\lambda\rho_\lambda+\tau_{ik;\bl}^\bla\rho_\bla+\tau_{ik}^\lambda\rho_{\lambda\bl}+\tau_{ik}^\bla\rho_{\bla\bl})\rho_\bi\\
&+(\rho_{k\bl\bi}+R_{k\bla\bi\bl}\rho_\lambda+\tau_{\bi\bl}^\lambda\rho_{k\lambda}+\tau_{\bi\bl}^\bla\rho_{k\bla})\rho_i\\
=&\rho_{ik}\rho_{\bi\bl}+\rho_{i\bl}\rho_{\bi k}+[\rho_{k\bl i}+R_{i\bla k\bl}\rho_\lambda-\tau_{\bl\bmu}^\lambda\tau_{ki}^\bmu\rho_\lambda+(R_{\bl\lambda ik}+\tau_{\bl\bmu}^{\bla}\tau_{ik}^\bmu)\rho_\bla+\tau_{ik}^\lambda\rho_{\lambda\bl}\\
&+\tau_{ik}^\bla(\rho_{\bl\bla}+\tau_{\bla\bl}^\mu\rho_\mu+\tau_{\bla\bl}^{\bmu}\rho_\bmu)]\rho_\bi+(\rho_{k\bl\bi}+R_{k\bla\bi\bl}\rho_\lambda+\tau_{\bi\bl}^\lambda\rho_{k\lambda}+\tau_{\bi\bl}^\bla\rho_{k\bla})\rho_i\\
=&(\rho_{k\bl i}\rho_\bi+\rho_{k\bl \bi}\rho_i)+\rho_{i\bl}\rho_{\bi k}+\rho_\bi\tau_{ik}^\lambda\rho_{\lambda\bl}+\rho_{k\bla}\tau_{\bi\bl}^{\bla}\rho_i+\rho_{ik}\rho_{\bi\bl}+\rho_{k\lambda}\tau_{\bi\bl}^\lambda\rho_i+\rho_\bi\tau_{ik}^\bla\rho_{\bl\bla}\\
&+R_{\bl\lambda ik}\rho_\bla\rho_\bi+R_{k\bla\bi\bl}\rho_\lambda\rho_i+R_{i\bla k\bl}\rho_\lambda\rho_\bi\\
=&(\rho_{k\bl i}\rho_\bi+\rho_{k\bl \bi}\rho_i)+\rho_{i\bl}\rho_{\bi k}+\rho_\bi\tau_{ik}^\lambda\rho_{\lambda\bl}+\rho_{k\bla}\tau_{\bi\bl}^{\bla}\rho_i+\rho_{ik}\rho_{\bi\bl}+(\rho_{\lam k}+\tau_{k\lam}^\mu\rho_\mu+\tau_{k\lam}^\bmu\rho_\bmu)\tau_{\bi\bl}^\lambda\rho_i\\
&+\rho_\bi\tau_{ik}^\bla(\rho_{\bla\bl}+\tau_{\bl\bla}^\mu\rho_\mu+\tau_{\bl\bla}^\bmu\rho_\bmu)+R_{\bl\lambda ik}\rho_\bla\rho_\bi+R_{k\bla\bi\bl}\rho_\lambda\rho_i+R_{i\bla k\bl}\rho_\lambda\rho_\bi\\
=&(\rho_{k\bl i}\rho_\bi+\rho_{k\bl \bi}\rho_i)+\rho_{i\bl}\rho_{\bi k}+\rho_\bi\tau_{ik}^\lambda\rho_{\lambda\bl}+\rho_{k\bla}\tau_{\bi\bl}^{\bla}\rho_i+\rho_{ik}\rho_{\bi\bl}+\rho_{\lam k}\tau_{\bi\bl}^\lambda\rho_i+\rho_\bi\tau_{ik}^\bla\rho_{\bla\bl}\\
&+(R_{\bl\lambda ik}+\tau_{ik}^\bmu\tau_{\bl\bmu}^\bla)\rho_\bla\rho_\bi+(R_{k\bla\bi\bl}+\tau_{k\mu}^\lam\tau_{\bi\bl}^\mu)\rho_\lambda\rho_i+(R_{i\bla k\bl}+\tau_{k\mu}^\bi\tau_{\bla\bl}^\mu+\tau_{ik}^\bmu\tau_{\bl\bmu}^\lam)\rho_\lambda\rho_\bi\\
\end{split}
\end{equation}
where we have used Proposition \ref{prop-first-bian} and Lemma \ref{lem-ricci-identity}. This completes the proof.
\end{proof}
\begin{thm}Let $(M,J,g)$ be a complete almost Hermitian manifold with holomorphic bisectional
curvature bounded from blow by $-K$ with $K\geq 0$, torsion bounded by $A_1$ and the (2,0) part of
the curvature tensor bounded by $A_2$. Then
\begin{equation}
\rho_{i\bj}\leq \left[\frac{1}{\rho}+\left((4\sqrt n+3)A_1^{2}+2A_2+K\right)^\frac{1}{2}\right]g_{i\bj}
\end{equation}
within the cut-locus of $o$.
\end{thm}
\begin{proof}
Let $\gamma$ be normal geodesic starting from $o$. Let $(e_1,e_2,\cdots,e_n)$
be a parallel unitary frame along $\gamma$. Let $X=\cs{\rho_{k\bl}}$, $A=\cs{\rho_\bla\tau_{\lam k}^l}$,
$B=\cs{\rho_{\bk\bl}}$, $C=\cs{\tau_{\bla\bl}^k\rho_\lam}$, $D=\cs{(R_{k\bla \bar\nu\bl}+\tau_{k\mu}^\lam\tau_{\bar\nu\bl}^\mu)\rho_\lam\rho_\nu}$
and $E=\cs{(R_{\mu\bla k\bl}+\tau_{k\nu}^\bmu\tau_{\bla\bl}^\nu+\tau_{\mu k}^{\bar\nu}\tau_{\bl\bar\nu}^\lam)\rho_\lam\rho_\bmu}$

Then, by Lemma \ref{lem-evolv-2-order},
we know that
\begin{equation}\label{eqn-ricati}
\begin{split}
&\frac{dX}{d\rho}+X^2+AX+XA^*\\
=&-B^*B-B^*C-C^*B-(D+D^*)-E\\
=&-(B^*+C^*)(B+C)+C^*C-(D+D^*)-E\\
\leq&C^*C-(D+D^*)-E.\\
\end{split}
\end{equation}
Moreover, for any column vector $u$, we have
\begin{equation}\label{eqn-1}
u^*C^*Cu=\|Cu\|^2\leq \frac{1}{2}A_1^2\|u\|^2,
\end{equation}
\begin{equation}
\left|\sum_{k,l,\lam,\nu=1}^n\bar u_kR_{k\bla\bar\nu l}\rho_\lam\rho_\nu u_l\right|\leq \frac{1}{2}A_2\|u\|^2,
\end{equation}
\begin{equation}\label{eqn-tau-square}
\begin{split}
&\left|\sum_{k,l,\mu,\nu,\lam=1}^n\bar u_k\tau_{k\mu}^\lam\rho_\lam\tau_{\bar\nu\bl}^\mu\rho_\nu u_l\ri|\\
\leq&\left(\sum_{\mu=1}^n\lf|\sum_{k,\lam=1}^n\bar u_k\tau_{k\mu}^\lam\rho_\lam\ri|^2\right)^\frac{1}{2}\left(\sum_{\mu=1}^n\lf|\sum_{\nu,l=1}^n\tau_{\bar\nu\bl}^\mu\rho_\nu u_l\ri|^2\right)^\frac{1}{2}\\
\leq&\left(\sum_{\lam,\mu=1}^n\lf|\sum_{k=1}^n\bar u_k\tau_{k\mu}^\lam\ri|^2\sum_{\lam=1}^n|\rho_\lam|^2\right)^\frac{1}{2}\times \frac{1}{\sqrt2}A_1\|u\|\\
\leq& \frac{\sqrt n}{2}A_1^{2}\|u\|^2.
\end{split}
\end{equation}
So,
\begin{equation}\label{eqn-2}
-u^*(D+D^*)u\leq (A_2+\sqrt n A_1^{2})\|u\|^2.
\end{equation}
Furthermore,
\begin{equation}
-\sum_{k.l,\lam,\mu=1}^n\bar u_kR_{\lam\bmu k\bl}\rho_\bla\rho_\mu u_l\leq \frac{1}{2}K\|u\|^2
\end{equation}
and, similarly as in \eqref{eqn-tau-square}
\begin{equation}
\lf|\bar u_k(\tau_{k\nu}^\bmu\tau_{\bla\bl}^\nu+\tau_{\mu k}^{\bar\nu}\tau_{\bl\bar\nu}^\lam)\rho_\lam\rho_\bmu u_l\ri|\leq \sqrt n A_1^2\|u\|^2.
\end{equation}
Hence
\begin{equation}\label{eqn-3}
-u^*Eu\leq  \left(\frac{1}{2}K+\sqrt n A_1^2\right)\|u\|^2.
\end{equation}
Combining \eqref{eqn-ricati},\eqref{eqn-1},\eqref{eqn-2} and \eqref{eqn-3}, we get
\begin{equation}\label{eqn-ricati-X}
\frac{dX}{d\rho}+X^2+AX+XA^*\leq \left[\left(2\sqrt n+\frac{1}{2}\right)A_1^2+A_2+\frac{1}{2}K\right]I_n.
\end{equation}
Moreover, by Lemma \ref{lem-comp-hess}, and that
\begin{equation}
\rho_{,k\bl}\sim\frac{1}{\rho}(\delta_{k\bl}-\rho_k\rho_\bl)
\end{equation}
as $\rho\to 0^+$(See for example \cite{TY2}), we have
\begin{equation}\label{eqn-initial-X}
X\leq \left(\frac{1}{\rho}+\frac{\sqrt 2}{2} A_1\right)I_n
\end{equation}
as $\rho\to 0^+$.

Let $Y=\left[\frac{1}{\rho}+\left((4\sqrt n+3)A_1^{2}+2A_2+K\right)^\frac{1}{2}\right]I_n$, noting that
$$(A+A^*)\geq -\sqrt 2 A_1I_n$$
 and \eqref{eqn-ricati-X}, we have
\begin{equation}
\begin{split}
&\frac{dY}{d\rho}+Y^2+AY+YA^*\\
=&\left\{-\frac{1}{\rho^2}+\left[\frac{1}{\rho}+\left((4\sqrt n+3)A_1^{2}+2A_2+K\right)^\frac{1}{2}\right]^2\right\}I_n\\
&+\left[\frac{1}{\rho}+\left((4\sqrt n+3)A_1^{2}+2A_2+K\right)^\frac{1}{2}\right](A+A^*)\\
\geq&\lf[\left((4\sqrt n+3)A_1^{2}+2A_2+K\right)-\sqrt 2A_1\left((4\sqrt n+3)A_1^{2}+2A_2+K\right)^\frac{1}{2}\ri]I_n\\
\geq&\lf[\left((4\sqrt n+3)A_1^{2}+2A_2+K\right)-\left((4\sqrt n+3)A_1^{2}+2A_2+K\right)/2-A_1^2\ri]I_n\\
= &\left[\left(2\sqrt n+\frac{1}{2}\right)A_1^2+A_2+\frac{1}{2}K\right]I_n\\
\geq&\frac{dX}{d\rho}+X^2+AX+XA^*\\
\end{split}
\end{equation}
where we have also used that $\sqrt 2xy\leq x^2+y^2/2$.

Moreover,
\begin{equation}
Y\geq X
\end{equation}
as $\rho\to 0^+$ by \eqref{eqn-initial-X}. By comparison of matrix Ricatti equations \cite{R}, we have
\begin{equation}
X(\rho)\leq Y(\rho)
\end{equation}
for all $\rho$ with in the cut-locus of $o$. This completes the proof of the theorem.
\end{proof}

In the following, we give a diameter estimate for almost Hermitian manifolds. We first extend the notion of quasi-holomorphic sectional curvature in \cite{CY} for Hermitian
manifolds to almost Hermitian manifolds.
\begin{defn}
Let $(M,J,g)$ be an almost Hermitian manifold. Let $X$ be a real unit vector on $M$.
Define the quasi holomorphic sectional curvature $QH(X)$ as
\begin{equation}
QH(X)=R_{1\bar 1 1\bar 1}-\sum_{i=2}^n|\tau_{i1}^1+\tau_{i 1}^{\bar 1}|^2
\end{equation}
where we have fixed a unitary frame $(e_1,e_2,\cdots,e_n)$ with $e_1=\frac{1}{\sqrt 2}(X-\ii JX)$.
\end{defn}
\begin{rem}
When the complex structure is integrable, the definition of quasi holomorphic sectional curvature
is the same as that in \cite{CY}.
\end{rem}

\begin{thm}
Let $(M,J,g)$ be a complete almost Hermitian manifold and the quasi holomorphic sectional
curvature is not less than $K>0$. Then $d(M)\leq \pi/\sqrt K$.
\end{thm}
\begin{proof} Fixed a unitary frame $(e_1,e_2,\cdots,e_n)$, using Lemma \ref{lem-evolv-1-order} and
the eighth equality of \eqref{eqn-2-order} inLemma \ref{lem-evolv-2-order}, noting that $\tau$ and $R$ are both skew symmetric, we have
\begin{equation}\label{eqn-hess}
\begin{split}
&\frac{d}{d\rho}(\rho_{k\bl}\rho_\bk\rho_l)\\
=&(\rho_{k\bl}\rho_\bk\rho_l)_i\rho_\bi+(\rho_{k\bl}\rho_\bk\rho_l)_\bi\rho_i\\
=&(\rho_{k\bl i}\rho_\bi+\rho_{k\bl\bi}\rho_i)\rho_\bk\rho_l+\rho_{k\bl}(\rho_{\bk i}\rho_\bi+\rho_{\bk \bi}\rho_i)\rho_l+\rho_{k\bl}\rho_\bk(\rho_{li}\rho_\bi+\rho_{l\bi}\rho_i)\\
=&-(\rho_{i\bl}\rho_{\bi k}+\rho_\bi\tau_{ik}^\lambda\rho_{\lambda\bl}+\rho_{k\bla}\tau_{\bi\bl}^{\bla}\rho_i+\rho_{ik}\rho_{\bi\bl}+\rho_{k\lambda}\tau_{\bi\bl}^\lambda\rho_i+\rho_\bi\tau_{ik}^\bla\rho_{\bl\bla}\\
&+R_{\bl\lambda ik}\rho_\bla\rho_\bi+R_{k\bla\bi\bl}\rho_\lambda\rho_i+R_{i\bla k\bl}\rho_\lambda\rho_\bi)\rho_\bk\rho_l+\rho_{k\bl}(\rho_{i \bk}\rho_\bi+\rho_{\bi\bk }\rho_i+\tau_{\bk\bi}^\lam\rho_\lam\rho_i+\tau_{\bk\bi}^\bla\rho_\bla\rho_i)\rho_l\\
&+\rho_{k\bl}\rho_\bk(\rho_{il}\rho_\bi+\tau_{li}^\lam\rho_\lam\rho_\bi+\tau_{li}^\bla\rho_\bla\rho_\bi+\rho_{\bi l}\rho_i)\\
=&-(\rho_{i\bl}\rho_{\bi k}\rho_l\rho_\bk+\rho_{ik}\rho_{\bi\bl}\rho_\bk\rho_l)-R_{i\bj k\bl}\rho_\bi\rho_j\rho_\bk\rho_l+\rho_{k\bl}(\tau_{\bk\bi}^\lam\rho_\lam+\tau_{\bk\bi}^\bla\rho_\bla)\rho_i\rho_l+\rho_{k\bl}\rho_\bk(\tau_{li}^\lam\rho_\lam+\tau_{li}^\bla\rho_\bla)\rho_\bi\\
=&-[\rho_{i\bl}\rho_{\bi k}\rho_l\rho_\bk+(-\rho_{k}\rho_{i\bk}+\tau_{ik}^\lam\rho_\lam\rho_\bk+\tau_{ik}^\bla\rho_\bla\rho_\bk)(-\rho_{\bl}\rho_{l\bi}+\tau_{\bi\bl}^\lam\rho_\lam\rho_l+\tau_{\bi\bl}^\bla\rho_\bla\rho_l)]\\
&-R_{i\bj k\bl}\rho_\bi\rho_j\rho_\bk\rho_l+\rho_{k\bl}(\tau_{\bk\bi}^\lam\rho_\lam+\tau_{\bk\bi}^\bla\rho_\bla)\rho_i\rho_l+\rho_{k\bl}\rho_\bk(\tau_{li}^\lam\rho_\lam+\tau_{li}^\bla\rho_\bla)\rho_\bi\\
=&-2\rho_{k\bi}\rho_{i\bl}\rho_l\rho_\bk-(\tau_{ik}^\lam\rho_\lam\rho_\bk+\tau_{ik}^\bla\rho_\bla\rho_\bk)(\tau_{\bi\bl}^\lam\rho_\lam\rho_l+\tau_{\bi\bl}^\bla\rho_\bla\rho_l)\\
&-R_{i\bj k\bl}\rho_\bi\rho_j\rho_\bk\rho_l+2\rho_{k\bl}(\tau_{\bk\bi}^\lam\rho_\lam+\tau_{\bk\bi}^\bla\rho_\bla)\rho_i\rho_l+2\rho_{k\bl}\rho_\bk(\tau_{li}^\lam\rho_\lam+\tau_{li}^\bla\rho_\bla)\rho_\bi.\\
\end{split}
\end{equation}
Assume that $e_1=\frac{1}{\sqrt 2}(\nabla \rho-\ii J\nabla \rho)$. Then,
\begin{equation}
\rho_1=\rho_{\bar 1}=\frac{1}{\sqrt 2}\ \mbox{and}\ \rho_\a=\rho_\ba=0\ \mbox{for $\alpha>1$.}
\end{equation}
Then, let $f=\rho_{k\bl}\rho_\bk\rho_l=\rho_{1\bar 1}/2$, by (\ref{eqn-hess}), we know that
\begin{equation}
\begin{split}
&\frac{df}{d\rho}\\
=&-4f^2-\frac{1}{4}R_{1\bar 1 1\bar 1}\\
&-\sum_{i=2}^n(|\rho_{1\bi}|^2-2\mbox{Re}\{\rho_{1\bi}(\tau_{i1}^{\bar 1}+\tau_{i1}^1)/\sqrt 2\}+\frac{1}{4}|\tau_{i1}^1+\tau_{i1}^{\bar 1}|^2)\\
\leq&-4f^2-\frac{1}{4}(R_{1\bar 1 1\bar 1}-\sum_{i=2}^n|\tau_{i1}^1+\tau_{i1}^{\bar 1}|^2)\\
\leq &-4f^2-\frac{K}{4}
\end{split}
\end{equation}
Moreover, by Lemma \ref{lem-comp-hess}, we have
\begin{equation}
\rho_{k\bl}\rho_\bk\rho_l=\rho_{,k\bl}\rho_\bk\rho_l+\frac{1}{2}(\tau_{k\lam}^l\rho_\bla+\tau_{\bl\bla}^\bk \rho_\lam)\rho_\bk\rho_l=\rho_{,k\bl}\rho_\bk\rho_l\sim \frac{1}{4\rho}
\end{equation}
as $\rho\to 0$. By comparison of Riccati equation \cite{R}, we know that
\begin{equation}
f\leq \frac{\sqrt{K}}{4}\cot(\sqrt K\rho).
\end{equation}
Hence, by a classical argument(See for example \cite{Li}), we get the conclusion.
\end{proof}
\begin{rem}
The diameter estimate above was disguised with a seemingly different curvature assumption in \cite{Gray1}. Indeed, using the curvature identities in \cite{Yu3}, one can find that the two curvature assumptions in \cite{Gray1} and in the above are the same.
\end{rem}
\section{First eigenvalue estimate for quasi K\"ahler manifolds}
In this section, we give a sharp first eigenvalue estimate for quasi K\"ahler manifolds. We first recall the definition and some properties of quasi K\"ahler manifolds.

Let $(M,J)$ be an almost complex manifold. Since $J$ is not necessary integrable, the exterior differentiation $d\alpha$ of a $(p,q)$-form $\alpha$ on an almost complex manifold $(M,J)$ has four components: $(p-1,q+2)$ component, $(p,q+1)$ component, $(p+1,q)$ component and $(p+2,q-1)$ component. We denote the $(p-1,q+2)$ component of $d\alpha$ as $\bar A\alpha$, the $(p,q+1)$ component as $\dbar\alpha$, the $(p+1,q)$ component as $\partial \alpha$ and  the $(p+2,q-1)$ component as $A\alpha$.
\begin{defn}
An almost Hermitian manifold $(M,J,g)$ is called a quasi K\"ahler manifold if $\dbar\omega_g=0$.
\end{defn}

The following criterion for quasi K\"ahlerity is well known.
\begin{prop}[\cite{k,twy}]\label{prop-crt-quasi-kahler}
Let $(M,J,g)$ be an almost Hermitian manifold. Then, it is quasi K\"ahler if and only if $\tau_{ij}^k=0$ for any $i,j$ and $k$.
\end{prop}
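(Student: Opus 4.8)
The plan is to compute $d\omega_g$ through the canonical connection, exploiting that $\nabla\omega_g=0$. Since the canonical connection is almost Hermitian we have $\nabla g=0$ and $\nabla J=0$, and because $\omega_g(X,Y)=g(JX,Y)$ these two facts force $\nabla\omega_g=0$. Feeding this into the classical first-order identity expressing the exterior derivative of a $2$-form through a connection and its torsion, namely
\begin{equation*}
d\omega_g(X,Y,Z)=\omega_g(\tau(X,Y),Z)-\omega_g(\tau(X,Z),Y)+\omega_g(\tau(Y,Z),X),
\end{equation*}
all the covariant-derivative terms drop out and $d\omega_g$ becomes a purely algebraic expression in the torsion. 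This reduction is what makes the statement transparent, and it can be derived directly from $d\omega_g(X,Y,Z)=X\omega_g(Y,Z)-\cdots-\omega_g([Y,Z],X)$ together with $\tau(X,Y)=\nabla_XY-\nabla_YX-[X,Y]$.

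Next I would isolate $\dbar\omega_g$, which is the $(1,2)$-component of $d\omega_g$, by evaluating the identity on one vector of type $(1,0)$ and two of type $(0,1)$. Thus, in a fixed unitary $(1,0)$-frame $(e_1,\dots,e_n)$, the condition $\dbar\omega_g=0$ is equivalent to $d\omega_g(e_i,e_{\bj},e_{\bk})=0$ for all $i,j,k$. Among the three torsion terms this produces, the first two involve $\tau(e_i,e_{\bj})$ and $\tau(e_i,e_{\bk})$, i.e.\ the value of the torsion on one $(1,0)$ and one $(0,1)$ vector. This is exactly the $(1,1)$-part of the torsion, which vanishes identically by the defining property of the canonical connection. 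Hence only the term $\omega_g(\tau(e_{\bj},e_{\bk}),e_i)$ survives.

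Finally I would evaluate this surviving term. Writing $\omega_g=\ii\sum_m\theta^m\wedge\bar\theta^m$ in the dual unitary coframe, one checks that $\omega_g(e_{\bar l},e_i)=-\ii\delta_{il}$ while $\omega_g(e_l,e_i)=0$, so that $d\omega_g(e_i,e_{\bj},e_{\bk})$ is a nonzero constant multiple of the $(0,2)$-torsion coefficient $\tau^{\bi}_{\bj\bk}$. Since $\tau$ is real, $\tau^{\bi}_{\bj\bk}=\overline{\tau^i_{jk}}$, and therefore $\dbar\omega_g=0$ is equivalent to $\tau^i_{jk}=0$ for all $i,j,k$, which after relabeling is precisely the asserted condition $\tau_{ij}^k=0$. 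The only genuine content lies in the first paragraph's reduction together with the observation that the two mixed-type torsion terms are exactly the canonical connection's vanishing $(1,1)$-torsion; everything afterward is bookkeeping with types and a single conjugation, so I expect no real obstacle beyond keeping the index and complex-conjugation conventions consistent.
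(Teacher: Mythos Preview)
Your argument is correct. The paper does not supply its own proof of this proposition; it is stated with a citation to \cite{k,twy} and used as a known fact. The computation you outline---expressing $d\omega_g$ via the canonical connection so that only the torsion terms survive, evaluating on $(e_i,e_{\bj},e_{\bk})$, and using that the $(1,1)$-part of the torsion vanishes by definition of the canonical connection---is exactly the standard derivation one finds in those references. There is nothing to compare against in the present paper, and your proof would stand on its own.
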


Applying Proposition \ref{prop-crt-quasi-kahler} to Lemma \ref{lem-comp-hess},Lemma \ref{lem-comp-laplace} and Lemma \ref{lem-comp-div}, we have the following corollary.
\begin{cor}\label{cor-comp-quasi-kahler}
Let $(M,g,J)$ be a quasi K\"ahler manifold. Then $f_{i\bar j}=f_{,i\bj}$, $\Delta f=\Delta^L f$ and $\mbox{div} X=\mbox{div}_LX$.
\end{cor}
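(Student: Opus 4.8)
The plan is to read the corollary off directly from the three comparison lemmas by substituting the quasi K\"ahler condition, since no analysis is involved. First I would invoke Proposition \ref{prop-crt-quasi-kahler} to record that on $(M,J,g)$ the torsion components $\tau_{ij}^k$ vanish for all holomorphic indices $i,j,k$, and, taking complex conjugates (the manifold being real, so that $\tau_{\bi\bj}^\bk=\overline{\tau_{ij}^k}$ in a unitary frame), that $\tau_{\bi\bj}^\bk=0$ as well. The whole point is that these are exactly the \emph{purely holomorphic} and \emph{purely antiholomorphic} components of $\tau$, and I expect every torsion term on the right-hand sides of Lemmas \ref{lem-comp-hess}, \ref{lem-comp-laplace}, and \ref{lem-comp-div} to be of precisely this type once one tracks the barred versus unbarred indices.

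Concretely, I would handle the three identities in turn. In Lemma \ref{lem-comp-hess} the right-hand side is $\frac12(\tau_{i\lam}^j f_\bla+\tau_{\bj\bla}^\bi f_\lam)$; here $\tau_{i\lam}^j$ has all three indices unbarred and $\tau_{\bj\bla}^\bi$ has all three barred, so both vanish and $f_{i\bj}=f_{,i\bj}$. In Lemma \ref{lem-comp-laplace} the contractions $\tau_{i\lam}^i$ and $\tau_{\bi\bla}^\bla$ are again purely holomorphic and purely antiholomorphic, giving $\Delta f=\Delta^L f$. Finally, in Lemma \ref{lem-comp-div} the terms $\tau_{ji}^j$ and $\tau_{\bj\bi}^\bj$ vanish for the same reason, yielding $\mbox{div}X=\mbox{div}_LX$.

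There is essentially no obstacle here: the corollary is a bookkeeping consequence of the vanishing of a single torsion type. The only step at which I would slow down and check carefully is the index matching, making sure that in each lemma the surviving torsion terms really do have all lower and upper indices of the same (barred or unbarred) character, so that the quasi K\"ahler condition and its conjugate both apply and no mixed component such as $\tau_{ij}^\bk$ (which need not vanish on a non-integrable quasi K\"ahler manifold) enters. Since the three lemmas as stated contain only such pure terms, the substitution is immediate and the corollary follows at once.
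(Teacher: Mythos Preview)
Your proposal is correct and matches the paper's approach exactly: the paper simply states that the corollary follows by applying Proposition~\ref{prop-crt-quasi-kahler} to Lemmas~\ref{lem-comp-hess}, \ref{lem-comp-laplace}, and \ref{lem-comp-div}, and your write-up just makes explicit the index check that every torsion component appearing in those lemmas is of pure type $\tau_{ij}^k$ or its conjugate $\tau_{\bi\bj}^\bk$.
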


Because $\Delta$ coincides with
$\Delta^L$, by the same technique as in \cite{F}, we have the following estimate of the first eigenvalue which generalizes the eigenvalue estimate on compact K\"ahler manifolds.
Before stating the first eigenvalue estimate, we need the following definition of quasi Ricci curvature.
\begin{defn}
Let $(M,J,g)$ be a quasi K\"ahler manifold and let
\begin{equation}
\mathcal{R}_{i\bj}=R_{i\bj\lam\bla}-\frac{1}{2}(\tau_{\bla\bmu}^j\tau_{\lam i}^\bmu+\tau_{\lam\mu}^\bi\tau_{\bla\bj}^\mu)-\frac{1}{4}\tau_{\lam\mu}^\bi\tau_{\bla\bmu}^j.
\end{equation}
We call $\mathcal{R}_{i\bj}$ the quasi Ricci curvature of the quasi K\"ahler manifold.
\end{defn}

\begin{thm}
Let $(M,J,g)$ be a compact quasi K\"ahler manifold with the quasi Ricci curvature bounded
from below by a positive constant $K$. Then $\lam_1\geq 2K$, where $\lam_1$ is the first
eigenvalue of $(M,g)$.
\end{thm}
\begin{proof}
Let $f$ be an eigenfunction of $-\Delta$ with eigenvalue $\lam_1$. That is
\begin{equation}
\Delta f=-\lam_1f.
\end{equation}
Then, fixed a unitary frame $(e_1,e_2,\cdots,e_n)$, using the Lemma \ref{lem-ricci-identity}, Corollary \ref{cor-comp-quasi-kahler}, Proposition \ref{prop-crt-quasi-kahler} and Proposition \ref{prop-first-bian}, we know that
\begin{equation}
\begin{split}
&-\lambda_1\int_M\|\nabla f\|^2\\
=&-2\lambda_1\int_Mf_if_\bi\\
=&2\int_Mf_{j\bj i}f_\bi+2\int_Mf_{\bj j\bi}f_i\\
=&2\int_M(f_{ji\bj}-R_{j\bla i\bj}f_\lam)f_\bi+2\int_M(f_{\bj\bi j}-R_{\bj\lam \bi j}f_\bla)f_i\\
=&2\int_M (f_{ji}f_\bi)_\bj+2\int_M(f_{\bj\bi}f_i)_\bj-2\int_M f_{ji}f_{\bi\bj}-2\int_M f_{\bj\bi}f_{ij}-2\int_M(R_{j\bla i\bj}f_\lam f_\bi+R_{\bj\lam \bi j}f_\bla f_i)\\
=&-2\int_M (f_{ij}+\tau_{ji}^\bla f_\bla)f_{\bi\bj}-2\int_M (f_{\bi\bj}+\tau_{\bj\bi}^\lam f_\lam)f_{ij}\\
&-2\int_M[(R_{i\bla j\bj}-\tau_{\bj\bmu}^\lam\tau_{ji}^\bmu)f_\lam f_\bi+(R_{\lam \bi j\bj}-\tau_{j\mu}^\bla\tau_{\bj\bi}^\mu)f_\bla f_i]\\
=&-4\int_Mf_{ij}f_{\bi\bj}+4\int_M\mbox{Re}\{f_{ij}\tau_{\bi\bj}^\lam f_\lam\}-4\int_M\left[R_{i\bj\lam\bla}-\frac{1}{2}(\tau_{\bla\bmu}^j\tau_{\lam i}^\bmu+\tau_{\lam\mu}^\bi\tau_{\bla\bj}^\mu)\right]f_\bi f_j\\
=&-4\int_M\sum_{i,j=1}^n\left|f_{ij}-\frac{1}{2}\tau_{ij}^\bla f_\bla\right|^2-4\int_M\left[R_{i\bj\lam\bla}-\frac{1}{2}(\tau_{\bla\bmu}^j\tau_{\lam i}^\bmu+\tau_{\lam\mu}^\bi\tau_{\bla\bj}^\mu)-\frac{1}{4}\tau_{\lam\mu}^\bi\tau_{\bla\bmu}^j\right]f_\bi f_j\\
\leq&-4\int_M\mathcal{R}_{i\bj}f_\bi f_j\\
\leq&-2K\int_M\|\nabla f\|^2.
\end{split}
\end{equation}
Hence
\begin{equation}
\lam_1\geq 2K.
\end{equation}
\end{proof}

For the equality case, we show that the equality can also be achieved by non-K\"ahler manifolds. Let $\mathbb{S}^6$ equipped with the standard almost complex structure and standard Riemannian metric. Then, $\mathbb{S}^6$ becomes a nearly K\"ahler manifold. For the this nearly K\"ahler manifold, $R^L_{i\bj}=5\delta_{ij}$, and by \cite{Yu3}, $R_{i\bj}=0$. By the curvature identity
$$R^L_{i\bj}=R_{i\bj}+\frac{5}{4}\sum_{\lambda,\mu=1}^n\tau_{\lambda\mu}^\bi\tau_{\bla\bmu}^j$$
in \cite{Yu3}, we have
\begin{equation}
\sum_{\lambda,\mu=1}^n\tau_{\lambda\mu}^\bi\tau_{\bla\bmu}^j=4\delta_{ij}.
\end{equation}
Therefore, the quasi Ricci curvature
\begin{equation}
\mathcal{R}_{i\bj}=R_{i\bj}+\frac{3}{4}\sum_{\lambda,\mu=1}^n\tau_{\lambda\mu}^\bi\tau_{\bla\bmu}^j=3\delta_{ij}.
\end{equation}
where we have used Lemma \ref{lem-criterion-nearly-kahler} in the next section. So, the constant $K$ in the last theorem is $3$. It is clear that the first eigenvalue of the standard metric on $\mathbb{S}^6$ is $6$. Hence, equality of the last theorem is achieved by the nearly K\"ahler manifold $\mathbb{S}^6$.
\section{Sharp Hessian comparison on nearly K\"ahler manifolds}
In this section, by using the Bochner techniques in \cite{LW}, we obtain a sharp Hessian comparison on nearly K\"ahler manifolds generalizing the results of \cite{LW,TY2}.

Recall the definition of nearly K\"ahler manifolds.
\begin{defn}
Let $(M,J,g)$ be an almost Hermitian manifold. It is called nearly K\"ahler if $(D_XJ)X=0$ for any tangent vector $X$.
\end{defn}

The following lemma is well know, see for example \cite{Gray3}.
\begin{lem}
Let $(M,J,g)$ be a quasi K\"ahler manifold, then
\begin{equation}
\nabla_XY=D_XY-\frac{1}{2}J(D_XJ)(Y).
\end{equation}
for any tangent vector fields $X$ and $Y$.
\end{lem}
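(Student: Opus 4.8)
The plan is to exhibit the right-hand side as a connection, verify it satisfies the three defining properties of the canonical connection, and then conclude by uniqueness. Set
$$\tilde\nabla_XY:=D_XY-\tfrac12 J(D_XJ)(Y).$$
Since $D_XJ$ is tensorial in both $X$ and $Y$, one checks at once that $\tilde\nabla$ is $C^\infty(M)$-linear in $X$, additive in $Y$, and obeys the Leibniz rule $\tilde\nabla_X(fY)=(Xf)Y+f\tilde\nabla_XY$, so $\tilde\nabla$ is an affine connection. By the definition of the canonical connection as the unique almost Hermitian connection with vanishing $(1,1)$-part of its torsion, it then suffices to prove (i) $\tilde\nabla g=0$, (ii) $\tilde\nabla J=0$, and (iii) that the $(1,1)$-part of the torsion of $\tilde\nabla$ vanishes.

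For (i) and (ii) I would first record the elementary pointwise identities, valid on any almost Hermitian manifold: $J$ is skew-adjoint for $g$; differentiating $g(JU,V)=-g(U,JV)$ with the metric connection $D$ shows $D_XJ$ is also skew-adjoint; and differentiating $J^2=-\mathrm{id}$ gives $J(D_XJ)=-(D_XJ)J$. Writing $S_X:=J(D_XJ)$, skew-adjointness of $J$ and of $D_XJ$ combine to give $g(S_XY,Z)=-g(Y,S_XZ)$, so $S_X$ is skew; since $Dg=0$ this yields $Xg(Y,Z)=g(\tilde\nabla_XY,Z)+g(Y,\tilde\nabla_XZ)$, i.e. (i). For (ii), expanding $(\tilde\nabla_XJ)(Y)=\tilde\nabla_X(JY)-J\tilde\nabla_XY$ and using $D_X(JY)-JD_XY=(D_XJ)(Y)$ together with $J(D_XJ)(JY)=(D_XJ)(Y)$ (a consequence of $J(D_XJ)=-(D_XJ)J$ and $J^2=-\mathrm{id}$) makes every term cancel, so $\tilde\nabla J=0$. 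I emphasize that neither (i) nor (ii) uses the quasi-K\"ahler hypothesis.

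The quasi-K\"ahler condition enters only in (iii), which I expect to be the crux. Since $D$ is torsion-free, the torsion of $\tilde\nabla$ is
$$\tilde\tau(X,Y)=-\tfrac12 J\big[(D_XJ)(Y)-(D_YJ)(X)\big].$$
As the $(1,1)$-part of a vector-valued $2$-form $T$ equals $\tfrac12\big(T(X,Y)+T(JX,JY)\big)$, its vanishing is equivalent to $\tilde\tau(JX,JY)=-\tilde\tau(X,Y)$. Here I would invoke the tensorial characterization of quasi-K\"ahler manifolds, namely $(D_{JX}J)(JY)+(D_XJ)(Y)=0$ (the Gray--Hervella characterization of the class $\mathcal{W}_1\oplus\mathcal{W}_2$, equivalent to the condition $\tau_{ij}^k=0$ via Proposition \ref{prop-crt-quasi-kahler}). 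Substituting $(D_{JX}J)(JY)=-(D_XJ)(Y)$ and $(D_{JY}J)(JX)=-(D_YJ)(X)$ into the formula for $\tilde\tau(JX,JY)$ gives precisely $\tilde\tau(JX,JY)=-\tilde\tau(X,Y)$, so the $(1,1)$-torsion vanishes; by uniqueness of the canonical connection, $\tilde\nabla=\nabla$, which is the claim.

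The single genuine obstacle is thus step (iii): establishing (or correctly citing) the equivalence between $\dbar\omega_g=0$, i.e. $\tau_{ij}^k=0$, and the pointwise identity $(D_{JX}J)(JY)=-(D_XJ)(Y)$. If one prefers to avoid quoting Gray--Hervella, this can be obtained inside the paper's own framework: the difference tensor $B:=D-\nabla$ is given explicitly in terms of $\tau$ by Lemma \ref{lem-comp-connection}, and $(D_XJ)(Y)=B(X,JY)-JB(X,Y)$, so feeding in $\tau_{ij}^k=0$ (and the vanishing $(1,1)$-torsion of $\nabla$) reproduces the required identity in a unitary frame. Everything outside this one equivalence is routine tensor manipulation.
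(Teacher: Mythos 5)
Your argument is correct, but there is nothing in the paper to compare it against: the lemma is stated there as well known with a citation to \cite{Gray3} and no proof is given, so your uniqueness argument is genuinely supplementary. Your route --- show $\tilde\nabla_XY:=D_XY-\frac12 J(D_XJ)Y$ is an almost Hermitian connection, show quasi-K\"ahlerity kills the $(1,1)$-part of its torsion, and invoke the uniqueness built into the paper's definition of the canonical connection --- is the natural one, and the computations check out: skew-adjointness of $D_XJ$ and the anticommutation $(D_XJ)J=-J(D_XJ)$ give $\tilde\nabla g=0$ and $\tilde\nabla J=0$ with no integrability hypothesis; the torsion formula $\tilde\tau(X,Y)=-\frac12 J\left[(D_XJ)Y-(D_YJ)X\right]$ is right because $D$ is torsion-free; and the reformulation of ``vanishing $(1,1)$-part'' as $\tilde\tau(JX,JY)=-\tilde\tau(X,Y)$ is the correct real-variable translation. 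You also correctly locate where quasi-K\"ahlerity is indispensable: without it $\tilde\nabla$ is the first canonical (Lichnerowicz) connection, which is almost Hermitian but in general differs from the Ehresmann--Libermann connection, so the hypothesis is used exactly where it must be. As for the one step you flag as the obstacle --- the equivalence of $\dbar\omega_g=0$ with $(D_{JX}J)(JY)=-(D_XJ)Y$ --- your in-paper fallback does close, and in fact in three lines: by $\C$-bilinear extension the identity holds automatically when $X,Y$ have the same type, so its entire content is $(D_ZJ)\bar W=0$ for $Z,W\in T^{1,0}M$; writing $(D_XJ)Y=B(X,JY)-JB(X,Y)$ with $B=D-\nabla$ and expanding $\langle B(e_i,e_{\bar j}),e_{\bar k}\rangle$ by Lemma \ref{lem-comp-connection}, the two terms involving the $(1,1)$-part of $\tau$ vanish identically (the canonical connection has none) and the surviving term is $-\frac12\tau_{\bar k\bar j}^{\bar i}$, so $(D_{e_i}J)e_{\bar j}=0$ for all $i,j$ if and only if $\tau_{kj}^{i}=0$, which is quasi-K\"ahlerity by Proposition \ref{prop-crt-quasi-kahler}. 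So the ``single genuine obstacle'' is an if-and-only-if already settled by the paper's own Lemma \ref{lem-comp-connection} and Proposition \ref{prop-crt-quasi-kahler}, and had you carried out that short computation instead of citing Gray--Hervella, the proof would be entirely self-contained within the paper's framework.
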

The following corollary is straight forward by the definition of nearly K\"ahler manifolds and the last lemma.
\begin{cor}
Let $(M,J,g)$ be a nearly K\"ahler manifold. Then $\na_XX=D_XX$
for any tangent vector field $X$.
\end{cor}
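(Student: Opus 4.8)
The plan is to read off the identity directly from the lemma immediately preceding the corollary, once I have observed that its hypotheses are met. That lemma is stated for quasi K\"ahler manifolds, so the only preliminary point is that a nearly K\"ahler manifold is in particular quasi K\"ahler. This inclusion is standard: the nearly K\"ahler condition $(D_XJ)X=0$ says that $D\omega_g$ is totally skew, whence $d\omega_g$ is of pure type $(3,0)+(0,3)$ and its $(1,2)$-component $\dbar\omega_g$ vanishes; equivalently, in the Gray--Hervella classification nearly K\"ahler is the class $\mathcal W_1$, contained in the quasi K\"ahler class $\mathcal W_1\oplus\mathcal W_2$. Thus the formula of the last lemma is available on $(M,J,g)$.

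With the lemma in force, I would specialize it to $Y=X$, obtaining
\begin{equation}
\nabla_XX=D_XX-\frac12 J(D_XJ)(X).
\end{equation}
The defining property of a nearly K\"ahler manifold is exactly $(D_XJ)(X)=0$ for every tangent vector field $X$, so the correction term on the right vanishes identically, leaving $\nabla_XX=D_XX$, which is the assertion.

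There is no real obstacle to overcome here: the corollary is a one-line substitution into the previous lemma. The only place where a word of justification is warranted is the implication ``nearly K\"ahler $\Rightarrow$ quasi K\"ahler'', which is what guarantees that the lemma may be applied; granting that inclusion, the conclusion follows immediately from the nearly K\"ahler identity $(D_XJ)X=0$.
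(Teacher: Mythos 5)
Your proof is correct and is exactly the argument the paper intends: the paper declares the corollary ``straightforward'' from the quasi K\"ahler lemma $\nabla_XY=D_XY-\frac{1}{2}J(D_XJ)(Y)$ by setting $Y=X$ and invoking the defining identity $(D_XJ)X=0$. Your extra care in justifying the inclusion nearly K\"ahler $\Rightarrow$ quasi K\"ahler is sound (and could alternatively be read off from the paper's own torsion criteria, since Lemma \ref{lem-criterion-nearly-kahler} gives $\tau_{ij}^k=0$, which is precisely the quasi K\"ahler condition of Proposition \ref{prop-crt-quasi-kahler}).
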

The following criterion for nearly K\"ahler manifold is well known, see for example \cite{Na1,Na2}.
\begin{lem}\label{lem-criterion-nearly-kahler}
 An almost Hermitian manifold $(M,J,g)$ is nearly K\"ahler if and only if $\tau_{ij}^k=0$ and $\tau_{ij}^\bk=\tau_{jk}^\bi$ for all $i,j$ and $k$ when we fix a (1,0)-frame.
\end{lem}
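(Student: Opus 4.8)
The plan is to reduce the stated equivalence, in both directions, to a purely algebraic symmetry statement about the tensor $T(X,Y,Z):=g((D_XJ)Y,Z)$, and then to read the two torsion conditions off the comparison formula between $\nabla$ and $D$. First I would record the frame-independent facts about $T$ that hold on \emph{any} almost Hermitian manifold. Differentiating $J^2=-\id$ gives $(D_XJ)J+J(D_XJ)=0$, so each $(D_XJ)$ anticommutes with $J$ and hence interchanges $T^{1,0}$ and $T^{0,1}$; and since $(D_XJ)$ is $g$-skew, $T$ is skew in its last two slots. By definition $(M,J,g)$ is nearly K\"ahler exactly when $(D_XJ)X=0$, i.e. $T(X,X,Z)=0$, i.e. $T$ is \emph{also} skew in its first two slots.

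Next I would observe that both hypotheses place us on a quasi K\"ahler manifold: a nearly K\"ahler manifold is quasi K\"ahler (Gray--Hervella), and $\tau_{ij}^k=0$ is equivalent to quasi K\"ahlerity by Proposition \ref{prop-crt-quasi-kahler}. On a quasi K\"ahler manifold I can combine $\nabla_XY=D_XY-\tfrac12 J(D_XJ)Y$ with the torsion-freeness of $D$ (so $[X,Y]=D_XY-D_YX$) to obtain the key identity
$$\tau(X,Y)=\tfrac12 J\big[(D_YJ)X-(D_XJ)Y\big].$$
In a unitary $(1,0)$-frame, since $(D_{e_i}J)e_j\in T^{0,1}$ the right-hand side lies in $T^{0,1}$, which recovers $\tau_{ij}^k=0$, and pairing with $e_k$ (using $J=-\ii$ on $T^{0,1}$) gives the clean formula $\tau_{ij}^{\bar k}=\tfrac{\ii}{2}\big(T_{ijk}-T_{jik}\big)$, where $T_{ijk}:=T(e_i,e_j,e_k)$. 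I also want the \emph{pure-type} property: applying the identity to the vanishing $(1,1)$-part of the canonical torsion, $\tau(e_i,e_{\bar j})=0$, forces $(D_{e_{\bar j}}J)e_i=(D_{e_i}J)e_{\bar j}$; as these lie in opposite spaces they both vanish, so $(D_XJ)Y=0$ whenever $X,Y$ have opposite type. Hence the only possibly nonzero components of $T$ have all three arguments of the same type, which is exactly what lets me pass between the real condition $(D_XJ)X=0$ and a statement about holomorphic indices alone.

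With these in hand the argument is algebra. The condition $\tau_{ij}^{\bar k}=\tau_{jk}^{\bar i}$ reads $T_{ijk}-T_{jik}=T_{jki}-T_{kji}$; feeding in the always-valid skew-symmetry $T_{ijk}=-T_{ikj}$ turns this into the cyclic identity $T_{ijk}=T_{kij}$, and cyclic symmetry together with skewness in the last two slots is equivalent to full antisymmetry of $T_{ijk}$, hence to skewness in the first two slots. By the pure-type property this is the same as $T$ being skew in its first two arguments for all vectors, i.e. $(D_XJ)X=0$. Running the chain of implications backwards, nearly K\"ahlerity makes $T_{ijk}$ totally antisymmetric, from which $\tau_{ij}^{\bar k}=\tau_{jk}^{\bar i}$ and (via the displayed identity) $\tau_{ij}^k=0$ follow.

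The step I expect to be the main obstacle is the reduction to holomorphic indices: one must be certain that a symmetry imposed only on the holomorphic components $\tau_{ij}^{\bar k}$ genuinely controls $(D_XJ)$ on \emph{all} real vectors. This is precisely where the pure-type vanishing $(D_XJ)Y=0$ for opposite-type $X,Y$, extracted from the vanishing $(1,1)$-part of the canonical torsion, is essential; once it is secured, the remaining content is just bookkeeping with the two elementary symmetries of $T$.
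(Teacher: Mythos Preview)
The paper does not actually prove this lemma; it is stated as ``well known'' with a citation to Nagy \cite{Na1,Na2}, and no argument is supplied. Your proposal therefore goes well beyond what the paper provides, and the outline you give is correct.

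A brief check of the steps: once quasi K\"ahlerity is in hand (via Gray--Hervella in one direction, via Proposition~\ref{prop-crt-quasi-kahler} in the other), the formula $\nabla_XY=D_XY-\tfrac12 J(D_XJ)Y$ yields $\tau(X,Y)=\tfrac12 J\big[(D_YJ)X-(D_XJ)Y\big]$, and your expression $\tau_{ij}^{\bar k}=\tfrac{\ii}{2}(T_{ijk}-T_{jik})$ follows. The pure-type vanishing $(D_{e_i}J)e_{\bar j}=0$ that you extract from $\tau(e_i,e_{\bar j})=0$ is exactly the standard characterization of quasi K\"ahlerity in terms of $DJ$, so there is no circularity. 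Your algebraic reduction---that, in the presence of skewness in the last two slots, the identity $\tau_{ij}^{\bar k}=\tau_{jk}^{\bar i}$ is equivalent to cyclic invariance of $T_{ijk}$, hence to total antisymmetry, hence to $(D_XJ)X=0$ once the mixed-type components are known to vanish---is sound in both directions. The one point worth making explicit when you write this up is that the quasi K\"ahler formula for $\nabla$ which you invoke is logically independent of the lemma being proved (in the paper it happens to be stated immediately \emph{after} Lemma~\ref{lem-criterion-nearly-kahler}, but it does not rely on it).
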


Moreover, it turns our that the torsion is parallel for nearly K\"ahler manifolds.
\begin{thm}[Kirichenko \cite{Ki,Yu3}]\label{thm-para-tor}
Let $(M,J,g)$ be nearly K\"ahler manifold. Then $\nabla\tau=0$.
\end{thm}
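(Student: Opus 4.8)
The plan is to avoid the index grind entirely at the conceptual level by reducing the claim $\na\tau=0$ to the statement that the tensor $G(X,Y):=(D_XJ)Y$ is parallel for the canonical connection, and only then to unpack the parallelism of $G$ using the Bianchi identities of Proposition \ref{prop-first-bian}. The first task is therefore to relate $\tau$ to $DJ$.

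First I would exploit the connection formula stated above: on a quasi K\"ahler manifold $\na_XY=D_XY-\frac12 J(D_XJ)Y$. Substituting this into $\tau(X,Y)=\na_XY-\na_YX-[X,Y]$ and using that $D$ is torsion free, the Levi-Civita part cancels and one is left with $\tau(X,Y)=-\frac12 J\cs{G(X,Y)-G(Y,X)}$. Since $(M,J,g)$ is nearly K\"ahler, $G(X,X)=(D_XJ)X=0$, so $G$ is skew and the formula simplifies to $\tau(X,Y)=-JG(X,Y)$. Because the canonical connection satisfies $\na J=0$ it commutes with $J$, and expanding $(\na_W\tau)(X,Y)=\na_W(\tau(X,Y))-\tau(\na_WX,Y)-\tau(X,\na_WY)$ with $\tau=-JG$ gives $(\na_W\tau)(X,Y)=-J(\na_WG)(X,Y)$. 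As $J$ is invertible, proving $\na\tau=0$ is exactly equivalent to proving $\na G=0$.

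I would then translate $\na G=0$ into a unitary frame, where $G$ is carried by the torsion coefficients $\tau_{ij}^{\bk}$. By Lemma \ref{lem-criterion-nearly-kahler} the nearly K\"ahler hypothesis gives $\tau_{ij}^{k}=0$ and $\tau_{ij}^{\bk}=\tau_{jk}^{\bi}$, which together with the skew symmetry $\tau_{ij}^{\bk}=-\tau_{ji}^{\bk}$ makes $\tau_{ij}^{\bk}$ totally skew in its three holomorphic indices (and likewise for its conjugate). It then suffices to verify $\tau_{ij;l}^{\bk}=0$ and $\tau_{ij;\bl}^{\bk}=0$, the conjugate identities following by complex conjugation. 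The anti-holomorphic derivatives are the easy half: since $\tau_{\bj\bla}^{\bi}=\overline{\tau_{j\lam}^{i}}=0$ on a nearly K\"ahler manifold, Proposition \ref{prop-first-bian}(4) collapses to $\tau_{kl;\bj}^{\bi}=-R_{i\bj kl}$, so every anti-holomorphic derivative of the surviving torsion is captured by the single mixed curvature component $R_{i\bj kl}$.

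The heart of the matter, and the step I expect to be the main obstacle, is to show that this mixed component $R_{i\bj kl}$ vanishes and to control the remaining holomorphic derivatives $\tau_{ij;l}^{\bk}$, which are \emph{not} produced by the algebraic Bianchi identities of Proposition \ref{prop-first-bian} and so require the second Bianchi identity for the canonical connection. This is precisely where the full strength of the nearly K\"ahler condition, beyond mere quasi K\"ahlerity, must enter: one combines the total skew symmetry of $\tau_{ij}^{\bk}$ (which forces $R_{i\bj kl}=-\tau_{kl;\bj}^{\bi}$ to be skew in all three holomorphic slots) with the differentiated Bianchi relations to make the first-order system on the components of $\na\tau$ overdetermined, with only the zero solution. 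In the intrinsic picture this is Gray's second-order identity for $D^2J$: when $(\na_WG)(X,Y)$ is rewritten in terms of $(D_WG)(X,Y)$ via $\na=D-\frac12 JG$, the quadratic-in-$G$ terms coming from the connection difference must cancel exactly against the quadratic terms in Gray's identity, leaving $\na G=0$. Verifying that cancellation, keeping careful track of the anti-commutation $JG=-GJ$ and of the skew symmetries, is the one genuinely computational point; everything preceding it is formal.
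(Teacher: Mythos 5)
You should first note what you are up against: the paper does not prove Theorem \ref{thm-para-tor} at all --- it is quoted as Kirichenko's theorem with citations to \cite{Ki} and \cite{Yu3}, so there is no internal proof to compare with, and your attempt has to stand or fall as a self-contained argument. Judged that way, it has a genuine gap. Everything you actually carry out is correct but is only a reformulation of the statement: the identity $\tau(X,Y)=-J\,(D_XJ)Y$ on a nearly K\"ahler manifold, the equivalence $\na\tau=0\Leftrightarrow\na G=0$ (immediate since $G=J\tau$ and $\na J=0$), the total skew symmetry of $\tau_{ij}^{\bk}$ from Lemma \ref{lem-criterion-nearly-kahler}, and the collapse of Proposition \ref{prop-first-bian}(4) to $\tau_{kl;\bj}^{\bi}=-R_{i\bj kl}$. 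None of this proves anything yet; the theorem is exactly the two vanishing statements you then defer, namely (i) $R_{i\bj kl}=0$ and (ii) $\tau_{ij;l}^{\bk}=0$, and neither is established in your proposal.

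For (i), be careful about circularity: in this paper $R_{i\bj kl}=0$ appears as Corollary \ref{cor-first-bian}(1) and is \emph{deduced from} Theorem \ref{thm-para-tor}, so it cannot be invoked; an independent proof requires the curvature identities relating the canonical curvature $R$ to the Levi-Civita curvature $R^{L}$ (as in \cite{Yu3}) or a second-Bianchi computation, which you do not perform. For (ii), the assertion that the differentiated Bianchi relations make the first-order system on $\na\tau$ ``overdetermined, with only the zero solution'' is not an argument --- an overdetermined linear system can perfectly well have nonzero solutions unless you exhibit the relations and solve them --- and the claimed exact cancellation of the quadratic-in-$G$ terms against Gray's second-order identity for $D^{2}J$ is precisely the computational content of Kirichenko's theorem. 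Note also that the symmetry information you do obtain for free (differentiating the total skewness of $\tau_{ij}^{\bk}$ shows $\tau_{kl;\bj}^{\bi}$, hence $R_{i\bj kl}$, is totally skew in $i,k,l$) cannot by itself force vanishing. In short: you have correctly isolated the hard core of the theorem and sketched a plausible route through it consistent with the literature, but the decisive computation is announced rather than done, so the proposal is a plan, not a proof.
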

Applying Lemma \ref{lem-criterion-nearly-kahler} and Theorem \ref{thm-para-tor} to
Proposition \ref{prop-first-bian}, we have the following first Bianchi identities
for nearly K\"ahler manifolds.
\begin{cor}\label{cor-first-bian}
Let $(M,J,g)$ be a nearly K\"ahler manifold and fixed a unitary frame. Then
\begin{enumerate}
\item $R_{i\bj kl}=0$;
\item $R_{i\bj k\bl}-R_{k\bj i\bl}=-\tau_{ik}^\bla\tau_{\bj\bl}^\lam$;
\item $R_{i\bj k\bl}-R_{i\bl k\bj}=-\tau_{ik}^\bla\tau_{\bj\bl}^\lam$;
\item $R_{i\bj k\bl}=R_{k\bl i\bj}$.
\end{enumerate}
\end{cor}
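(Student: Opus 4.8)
The plan is to derive all four identities by specializing the general first Bianchi identities of Proposition \ref{prop-first-bian} to the nearly K\"ahler setting, so the only real content is bookkeeping. Two structural inputs drive everything. First, Theorem \ref{thm-para-tor} gives $\nabla\tau=0$, so every term carrying a semicolon (a covariant derivative of $\tau$) in Proposition \ref{prop-first-bian} drops out immediately; what survives are the quadratic-in-$\tau$ terms. Second, Lemma \ref{lem-criterion-nearly-kahler} yields $\tau_{ij}^k=0$, and taking conjugates $\tau_{\bi\bj}^\bk=0$ as well, so all ``pure-type'' torsion components vanish. The remaining relation $\tau_{ij}^\bk=\tau_{jk}^\bi$, combined with the intrinsic skewness $\tau_{ij}^\bk=-\tau_{ji}^\bk$ of a torsion tensor in its two arguments, shows that $\tau_{ij}^\bk$ is totally skew in the three indices $i,j,k$ (a cyclic shift and a transposition generate the full symmetric group $S_3$); by conjugation $\tau_{\bi\bj}^k$ is likewise totally skew. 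I would record these two facts first, since they are the engine of every simplification.

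With these in hand, identities (1)--(3) are short. For (1) I would start from Proposition \ref{prop-first-bian}(4): the term $\tau_{kl;\bj}^\bi$ vanishes by $\nabla\tau=0$, and the term $\tau_{\bj\bla}^\bi\tau_{kl}^\bla$ vanishes because both $\tau_{\bj\bla}^\bi$ (a pure $(0,2)$-type component) and $\tau_{kl}^\bla$ are of the vanishing type, giving $R_{i\bj kl}=0$. For (2) I would use Proposition \ref{prop-first-bian}(1): dropping $\tau_{ik;\bl}^j$, the survivor is $-\tau_{ik}^\bla\tau_{\bl\bla}^j$, and applying the total skewness of $\tau_{\bar\cdot\bar\cdot}^\cdot$ via the cyclic (hence even) permutation $(l,\lambda,j)\mapsto(j,l,\lambda)$ rewrites $\tau_{\bl\bla}^j=\tau_{\bj\bl}^\lam$, producing exactly $-\tau_{ik}^\bla\tau_{\bj\bl}^\lam$. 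Identity (3) is the same mechanism applied to Proposition \ref{prop-first-bian}(2): after $\nabla\tau=0$ the survivor is $-\tau_{k\lam}^\bi\tau_{\bj\bl}^\lam$, and the cyclic relation $\tau_{k\lam}^\bi=\tau_{ik}^\bla$ gives the claimed right-hand side.

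The step I expect to be the main obstacle is identity (4), since it requires a genuine cancellation rather than a single rewrite. Starting from Proposition \ref{prop-first-bian}(3), the two semicolon terms vanish and one is left with $R_{i\bj k\bl}-R_{k\bl i\bj}=-\tau_{k\lam}^\bi\tau_{\bj\bl}^\lam-\tau_{ik}^\bla\tau_{\bj\bla}^l$. Here the two torsion-quadratic terms must be shown to be negatives of each other. The first becomes $-\tau_{ik}^\bla\tau_{\bj\bl}^\lam$ by the cyclic (even) identity $\tau_{k\lam}^\bi=\tau_{ik}^\bla$ used above; the second requires instead the transposition $(j,\lambda,l)\mapsto(j,l,\lambda)$, which is odd, so $\tau_{\bj\bla}^l=-\tau_{\bj\bl}^\lam$ and the term becomes $+\tau_{ik}^\bla\tau_{\bj\bl}^\lam$. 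The sum therefore cancels to zero, yielding $R_{i\bj k\bl}=R_{k\bl i\bj}$. The delicate point throughout is consistent sign tracking under the total-skewness permutations and keeping the summation pairing $\bla\leftrightarrow\lam$ straight in the unitary frame; once the even/odd nature of each permutation is pinned down, the cancellation in (4) is forced.
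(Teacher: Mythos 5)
Your proposal is correct and takes essentially the same route as the paper, which proves the corollary exactly as you do: specialize Proposition \ref{prop-first-bian} using $\nabla\tau=0$ (Theorem \ref{thm-para-tor}) together with $\tau_{ij}^k=0$ and the total skewness of $\tau_{ij}^\bk$ obtained from Lemma \ref{lem-criterion-nearly-kahler} plus the skewness of the torsion, including the even/odd sign cancellation you track in (4). One minor slip in your justification of (1): $\tau_{kl}^\bla$ is of the generally \emph{nonvanishing} type $\tau_{ij}^\bk$ on a nearly K\"ahler manifold, so the term $\tau_{\bj\bla}^\bi\tau_{kl}^\bla$ vanishes solely because the first factor, being conjugate to the vanishing pure type $\tau_{j\lam}^i$, is zero --- which your argument already supplies, so the conclusion is unaffected.
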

By (4) of the above corollary, the first Ricci curvature and second Ricci curvature for nearly K\"ahler manifolds coincides, so we simply denote them as $R_{i\bj}$.

\begin{lem}\label{lem-pre-nearly-kahler}
Let $(M,J,g)$ be a nearly K\"ahler manifold, $o$ be a fixed point and $\rho(x)$ the the distance from
$x$ to $o$. Let $\gamma$ be a normal geodesic starting from $o$. Let $(e_1,e_2,\cdots,e_n)$ be a
unitary frame parallel along $\gamma$ with respect to the canonical connection with $e_1=\frac{1}{\sqrt 2}(\gamma'(0)-J\gamma'(0))$. Then $e_1=\frac{1}{\sqrt 2}(\gamma'-\ii J\gamma')$ all over
the $\gamma$, $\rho_{1}=\rho_{\bar 1}=\frac{1}{\sqrt 2}$, $\rho_\a=\rho_{\bar \a}=0$  for all $1<\alpha$ and $\rho_{i1}=-\rho_{i\bar1}$ for all $i\geq 1$.
with in the cut-locus of $o$.

\end{lem}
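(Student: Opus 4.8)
The plan is to reduce everything to a single structural fact about nearly K\"ahler geometry and then propagate the stated relations along $\gamma$ using only $\na J=0$, $\na g=0$, and the parallelism of the frame. The key observation is that here the Levi--Civita geodesic $\gamma$ is \emph{also} a geodesic for the canonical connection: since $\gamma$ is a normal geodesic, $D_{\gamma'}\gamma'=0$, and on a nearly K\"ahler manifold $\na_XX=D_XX$ (the corollary preceding Lemma~\ref{lem-criterion-nearly-kahler}), so $\na_{\gamma'}\gamma'=0$. Once this is in hand, the canonical connection — which unlike $D$ satisfies $\na J=0$ — can be used throughout.

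\emph{Identifying $e_1$.} I would set $V=\frac{1}{\sqrt2}(\gamma'-\ii J\gamma')$ along $\gamma$ and compute, using $\na J=0$ and $\na_{\gamma'}\gamma'=0$,
$$\na_{\gamma'}V=\frac{1}{\sqrt2}\left(\na_{\gamma'}\gamma'-\ii J\na_{\gamma'}\gamma'\right)=0.$$
Thus $V$ is $\na$-parallel along $\gamma$ and coincides with $e_1$ at $o$, so by uniqueness of parallel transport $e_1=V$ on all of $\gamma$. Conjugating, $e_{\bar1}=\frac{1}{\sqrt2}(\gamma'+\ii J\gamma')$, whence $\gamma'=\frac{1}{\sqrt2}(e_1+e_{\bar1})$.

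\emph{First derivatives of $\rho$.} Within the cut locus the gradient is the radial field, $\na\rho=\gamma'$, so $\rho_a=d\rho(e_a)=g(\gamma',e_a)$ under the bilinear extension of $g$. Feeding in $\gamma'=\frac{1}{\sqrt2}(e_1+e_{\bar1})$ together with the unitary relations $g(e_i,e_{\bar j})=\delta_{ij}$ and $g(e_i,e_j)=g(e_{\bar i},e_{\bar j})=0$ gives at once $\rho_1=g(\gamma',e_1)=\frac{1}{\sqrt2}$, $\rho_{\bar1}=\frac{1}{\sqrt2}$, and $\rho_\alpha=\rho_{\bar\alpha}=0$ for $\alpha>1$. (Equivalently one checks $d\rho(\gamma')=1$ and $d\rho(J\gamma')=g(\gamma',J\gamma')=0$ by $J$-invariance of $g$.)

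\emph{The relation $\rho_{i1}=-\rho_{i\bar1}$.} Because the frame is $\na$-parallel along $\gamma$ and $\na_{\gamma'}\gamma'=0$, metric compatibility $\na g=0$ gives, for each $i$,
$$\gamma'(\rho_i)=\gamma'\,g(\gamma',e_i)=g(\na_{\gamma'}\gamma',e_i)+g(\gamma',\na_{\gamma'}e_i)=0.$$
On the other hand, since $\na_{\gamma'}e_i=0$, the Hessian of $\rho$ in the radial direction is $(\na_{\gamma'}d\rho)(e_i)=\gamma'(\rho_i)-d\rho(\na_{\gamma'}e_i)=\gamma'(\rho_i)$. Decomposing $\gamma'=\frac{1}{\sqrt2}(e_1+e_{\bar1})$ and using that the second index is the differentiation index, this reads $\frac{1}{\sqrt2}(\rho_{i1}+\rho_{i\bar1})=\gamma'(\rho_i)=0$, i.e.\ $\rho_{i1}=-\rho_{i\bar1}$ for every $i\ge1$. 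The only genuinely nearly-K\"ahler input is the opening identification $\na_{\gamma'}\gamma'=0$; everything after it is formal. I expect the only point needing care to be the bookkeeping of the complex frame conventions — verifying $g(\gamma',J\gamma')=0$ and the unitary orthogonality relations — so that the real radial-geodesic facts translate correctly into the holomorphic indexing, rather than any substantive analytic difficulty.
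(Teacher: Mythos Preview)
Your proof is correct and follows essentially the same approach as the paper: both start from the nearly-K\"ahler fact $\na_{\gamma'}\gamma'=D_{\gamma'}\gamma'=0$, deduce $e_1=\frac{1}{\sqrt2}(\gamma'-\ii J\gamma')$ by parallel transport, and read off $\rho_1,\rho_{\bar1},\rho_\alpha$ from $\na\rho=\gamma'$. For the last identity the paper invokes Lemma~\ref{lem-evolv-1-order} (differentiating $\rho_i\rho_{\bar i}=\tfrac12$), whereas you compute $(\na_{\gamma'}d\rho)(e_i)=\gamma'(\rho_i)=0$ directly from the parallelism of the frame---the same content, arrived at with one fewer index swap.
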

\begin{proof}
Note that $\nabla_{\gamma'}\gamma'=D_{\gamma'}\gamma'=0$, we know that $e_1=\frac{1}{\sqrt 2}(\gamma'-J\gamma')$ all over
$\gamma$. It is clear that $e_1$ is also parallel along $\gamma$ with respect to the Leiv-Civita connection.
Moreover $e_1=\frac{1}{\sqrt 2}(\nabla\rho-J\nabla\rho)$. Hence
\begin{equation}
\rho_1=\vv<\nabla\rho,e_1>=\frac{1}{\sqrt 2},
\end{equation}
and
\begin{equation}
\rho_{\a}=\vv<\nabla\rho,e_\a>=0
\end{equation}
for all $\a>1$. By these and Lemma \ref{lem-evolv-1-order}, we know that
\begin{equation}
\rho_{i1}=-\rho_{i\bar 1}
\end{equation}
for all $i\geq 1$.
\end{proof}
\begin{defn}
On a nearly K\"ahler manifold, define
\begin{equation}
\mathcal{R}(X,\bar X,Y,\bar Y)=R(X,\bar X,Y,\bar Y)+\|\tau(X,Y)\|^2
\end{equation}
for any (1,0) vectors $X$ and $Y$.
\end{defn}
\begin{defn}
Let $(M,J,g)$ be a nearly K\"ahler manifold, we say that its quasi holomorphic bisectional curvature$\geq K$
if
\begin{equation}
\frac{\mathcal{R}(X,\bar X,Y,\bar Y)}{\|X\|^2\|Y\|^2+|\vv<X,\bar Y>|^2}\geq K
\end{equation}
for any two nonzero $(1,0)$ vectors $X$ and $Y$.
\end{defn}
\begin{thm}
Let $(M,J,g)$ be a complete nearly K\"ahler manifold and $o$ be a fixed point in $M$. Let $B_o(R)$ be a geodesic ball within the cut-locus of $p$. Suppose that the quasi holomorphic bisectional curvature on $B_o(R)$ is not less than $K$ where $K$ is a constant. Then
\begin{equation}
 \rho_{\alpha\bar\beta}\leq\left\{\begin{array}{ll}
 \sqrt{K/2}\cot(\sqrt{K/2}\rho)(g_{\alpha\bar\beta}-2\rho_\alpha \rho_{\bar\beta})+\sqrt{2K}\cot(\sqrt{2K}\rho)\rho_\alpha \rho_{\bar\beta}&(K>0)\\
 \frac{1}{r}(g_{\alpha\bar\beta}-\rho_\alpha \rho_{\bar\beta})&(K=0)\\
 \sqrt{-K/2}\coth(\sqrt{-K/2}\rho)(g_{\alpha\bar\beta}-2\rho_\alpha \rho_{\bar\beta})+\sqrt{-2K}\coth(\sqrt{-2K}\rho)\rho_\alpha \rho_{\bar\beta}&(K<0)\\
 \end{array}\right.
 \end{equation}
 in $B_o(R)$ with equality holds all over $B_o(R)$ if and only if $B_o(R)$ is holomorphic and isometric equivalent to the geodesic ball with radius $R$ in the K\"ahler space form of constant holomorphic bisectional curvature $K$, where $\rho$ is the distance function to the fixed point $o$.
\end{thm}
\begin{proof}
Let $\gamma$ be a geodesic starting from $o$, and $(e_1,e_2,\cdots,e_n)$ be the same as
in the last lemma. Then, by Lemma \ref{lem-criterion-nearly-kahler}
Lemma \ref{lem-evolv-2-order}, Corollary \ref{cor-first-bian} and Lemma \ref{lem-pre-nearly-kahler}, we know that
\begin{equation}
\begin{split}
&\rho_{k\bl i}\rho_\bi+\rho_{k\bl \bi}\rho_i\\
=&-\rho_{i\bl}\rho_{\bi k}-\rho_{ik}\rho_{\bi\bl}-\rho_{\lam k}\tau_{\bi\bl}^\lambda\rho_i-\rho_\bi\tau_{ik}^\bla\rho_{\bla\bl}-(R_{i\bla k\bl}+\tau_{k\mu}^\bi\tau_{\bla\bl}^\mu+\tau_{ik}^\bmu\tau_{\bl\bmu}^\lam)\rho_\lambda\rho_\bi\\
=&-\rho_{i\bl}\rho_{\bi k}-\rho_{1k}\rho_{\bar1\bl}-\sum_{\a=2}^n\rho_{\a k}\rho_{\ba\bl}+\frac{1}{\sqrt 2}(\rho_{\lam k}\tau_{\bla\bl}^1+\tau_{\lam k}^{\bar 1}\rho_{\bla\bl})-\frac{1}{2}(R_{1\bar 1k\bar l}+2\tau_{\lam k}^{\bar 1}\tau_{\bla\bl}^{1})\\
=&-\rho_{k\bi}\rho_{i\bl}-\rho_{k\bar 1}\rho_{1\bl}-\sum_{\a=2}^n\rho_{\a k}\rho_{\ba\bl}+\frac{1}{\sqrt 2}\lf(\sum_{\a=2}^n\rho_{\a k}\tau_{\ba\bl}^1+\sum_{\a=2}^n\tau_{\a k}^{\bar 1}\rho_{\ba\bl}\ri)-\frac{1}{2}\lf(R_{1\bar 1k\bar l}+2\sum_{\alpha=2}^n\tau_{\a k}^{\bar 1}\tau_{\ba\bl}^{1}\ri).\\
\end{split}
\end{equation}
Let $X=(\rho_{k\bar l})^{l=1,2,\cdots,n}_{k=1,2,\cdots,n}$, $B=(\rho_{\bk\bl})_{k=2,3,\cdots,n}^{l=1,2,\cdots,n}$, $C=\frac{1}{\sqrt 2}(\tau_{\bk\bl}^1)_{k=2,3,\cdots,n}^{l=1,2,\cdots,n}$, $D=\lf(-\frac{1}{2}(R_{1\bar 1k\bar l}+2\sum_{\alpha=2}^n\tau_{\a k}^{\bar 1}\tau_{\ba\bl}^{1})\ri)_{k=1,2,\cdots,n}^{l=1,2,\cdots,n}$ and $X_1$ be the first
column of $X$. Then
\begin{equation}
\begin{split}
&\frac{dX}{d\rho}+X^2+X_1X_1^{*}\\
=&-B^*B+B^*C+C^*B+D\\
=&-(B-C)^*(B-C)+C^*C+D\\
\leq&C^*C+D\\
\leq&\left(\begin{array}{cc}-K&0\\0&-\frac{K}{2}I_{n-1}
\end{array}\right).
\end{split}
\end{equation}
Then, by the same argument as in \cite{TY2}, we have
\begin{equation}
 X\leq\left\{\begin{array}{ll}\left(\begin{array}{cc}\frac{\sqrt{2K}}{2}\cot(\sqrt {2K} \rho)&0\\
0&\sqrt{K/2}\cot(\sqrt {K/2} \rho)I_{n-1}\\
\end{array}\right)&(K>0)\\
\left(\begin{array}{cc}\frac{1}{2\rho}&0\\
0&\frac{1}{\rho}I_{n-1}\\
\end{array}\right)&(K=0)\\
\left(\begin{array}{cc}\frac{\sqrt{-2K}}{2}\coth(\sqrt {-2K} \rho)&0\\
0&\sqrt{-K/2}\coth(\sqrt {-K/2} \rho)I_{n-1}\\
\end{array}\right)&(K<0).\\
\end{array}\right.
\end{equation}
This is the equality in the conclusion of the theorem.

If the equality holds, we have $\rho_{kl}=\frac{1}{\sqrt 2}\tau_{kl}^{\bar 1}$ for $k,l=2,3,\cdots, n$. By Lemma \ref{lem-ricci-identity}, we have
\begin{equation}
\rho_{kl}=\rho_{lk}+\tau_{kl}^\bla\rho_\bla=\frac{1}{\sqrt 2}\tau_{lk}^{\bar 1}+\frac{1}{\sqrt 2}\tau_{kl}^{\bar 1}=0
\end{equation}
for all $k,l=2,3,\cdots,n$. Hence
\begin{equation}
\tau_{kl}^{\bar 1}=0
\end{equation}
for all $k,l=1,2,\cdots,n$. In particular, at the point $o$, we have
\begin{equation}
\tau_{ij}^\bk(o)=0
\end{equation}
for all $i,j$ and $k$. By Theorem \ref{thm-para-tor}, we know that $\tau=0$ and hence $M$ is K\"ahler. At this position, the same argument in \cite{TY2}, we obtain the conclusion when equality holds.
\end{proof}

By the Hessian comparison, we have the following direct corollaries.
\begin{cor}
Let $(M,J,g)$ be a complete nearly K\"ahler manifold and $o$ be a fixed point in $M$. Let $B_o(R)$ be a geodesic ball within the cut-locus of $p$. Suppose that the quasi holomorphic bisectional curvature on $B_o(R)$ is not less than $K$ where $K$ is a constant. Then
\begin{equation}
 \Delta \rho\leq\left\{\begin{array}{ll}
 \sqrt{2K}\cs{\cot\cs{\sqrt{2K}\rho}+(n-1)\cot\cs{\sqrt{K/2}\rho}}&(K>0)\\
 \frac{2n-1}{\rho}&(K=0)\\
 \sqrt{-2K}\cs{\coth\cs{\sqrt{-2K}\rho}+(n-1)\coth\cs{\sqrt{-K/2}\rho}}&(K<0)\\
 \end{array}\right.
 \end{equation}
 in $B_o(R)$ with equality holds all over $B_o(R)$ if and only if $B_o(R)$ is holomorphic and isometric equivalent to the geodesic ball with radius $R$ in the K\"ahler space form of constant holomorphic bisectional curvature $K$, where $\rho$ is the distance function to the fixed point $o$.
\end{cor}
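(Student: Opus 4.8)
The plan is to adapt the matrix Riccati comparison method of Li--Wang \cite{LW} and \cite{TY2} to the canonical connection. First I would fix the parallel unitary frame $(e_1,\dots,e_n)$ along the normal geodesic $\gamma$ as in Lemma \ref{lem-pre-nearly-kahler}, so that $\rho_1=\rho_{\bar1}=\frac{1}{\sqrt2}$, $\rho_\a=\rho_\ba=0$ for $\a>1$, and crucially $\rho_{i1}=-\rho_{i\bar1}$. Then I would specialize the general evolution identity of Lemma \ref{lem-evolv-2-order} to the nearly K\"ahler setting: the conditions $\tau_{ij}^k=0$ and $\tau_{ij}^\bk=\tau_{jk}^\bi$ (Lemma \ref{lem-criterion-nearly-kahler}), the identity $R_{i\bj kl}=0$ and the symmetries of Corollary \ref{cor-first-bian}, together with the frame relations above, collapse most terms and leave $\rho_{k\bl i}\rho_\bi+\rho_{k\bl\bi}\rho_i$ expressed through $\rho_{k\bi}\rho_{i\bl}$, the single coupling term $\rho_{k\bar1}\rho_{1\bl}$, the off-diagonal products $\rho_{\a k}\rho_{\ba\bl}$, two mixed torsion--Hessian terms, and the curvature/torsion block $R_{1\bar1k\bl}+2\sum_{\a\ge2}\tau_{\a k}^{\bar1}\tau_{\ba\bl}^1$.

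Next I would package this into matrices. Writing $X=(\rho_{k\bl})$, $B=(\rho_{\bk\bl})_{k\ge2}$, $C=\frac{1}{\sqrt2}(\tau_{\bk\bl}^1)_{k\ge2}$ and $D=\bigl(-\tfrac12(R_{1\bar1k\bl}+2\sum_{\a\ge2}\tau_{\a k}^{\bar1}\tau_{\ba\bl}^1)\bigr)$, and letting $X_1$ be the first column of $X$, the identity becomes the modified Riccati relation $\frac{dX}{d\rho}+X^2+X_1X_1^{*}=-B^{*}B+B^{*}C+C^{*}B+D$. The appearance of the extra rank-one term $X_1X_1^{*}$, and hence of the factor $2$ in the first diagonal direction below, is exactly the effect of the relation $\rho_{i1}=-\rho_{i\bar1}$. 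Completing the square, $-B^{*}B+B^{*}C+C^{*}B=-(B-C)^{*}(B-C)+C^{*}C$, so $\frac{dX}{d\rho}+X^2+X_1X_1^{*}\le C^{*}C+D$, with equality precisely when $B=C$.

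The heart of the argument is to bound $C^{*}C+D$ by the curvature hypothesis. For the $(1,0)$ vector $Y=\sum u^k e_k$ I would show that $u^{*}(C^{*}C+D)u=-\tfrac12\bigl(R_{1\bar1k\bl}\,u^k\bar u^l+\sum_{\a\ge2}\tau_{\a k}^{\bar1}\tau_{\ba\bl}^1\,u^k\bar u^l\bigr)$; the crucial point is that, using the symmetry $\tau_{ij}^\bk=\tau_{jk}^\bi$ of Lemma \ref{lem-criterion-nearly-kahler} (which also forces $\tau_{1k}^{\bar1}=\tau_{k1}^{\bar1}=-\tau_{1k}^{\bar1}=0$), the torsion sum is precisely $\|\tau(e_1,Y)\|^2$, so the bracket equals $\mathcal{R}(e_1,\bar e_1,Y,\bar Y)$. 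Since $\|e_1\|=1$ and $|\vv<e_1,\bar Y>|^2=|u^1|^2$, the quasi holomorphic bisectional curvature bound gives $\mathcal{R}(e_1,\bar e_1,Y,\bar Y)\ge K(\|u\|^2+|u^1|^2)$, whence $C^{*}C+D\le\mathrm{diag}(-K,-\tfrac{K}{2}I_{n-1})$. I expect this identification --- matching $C^{*}C+D$ with $-\tfrac12\mathcal{R}$ under the correct normalization, so that the geodesic direction carries weight $K$ and the transverse directions weight $K/2$ --- to be the main obstacle, since it is what makes the estimate sharp and tuned to the K\"ahler space form of constant holomorphic bisectional curvature.

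Finally I would conclude by comparison. The diagonal matrix $Y(\rho)$ in the statement solves the model equations $x_1'+2x_1^2=-K$ for its first entry and $w'+w^2=-\tfrac{K}{2}$ for the remaining ones, which one checks directly for the $\cot/\coth$ profiles (and the $K=0$ limits); its behavior as $\rho\to0^{+}$ matches that of $X$, since for nearly K\"ahler manifolds $\rho_{k\bl}=\rho_{,k\bl}\sim\frac1\rho(\delta_{k\bl}-\rho_k\rho_\bl)$ by Lemma \ref{lem-comp-hess} and $\tau_{ij}^k=0$, giving $\rho_{1\bar1}\sim\frac{1}{2\rho}$ and $\rho_{\a\ba}\sim\frac1\rho$. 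The matrix Riccati comparison theorem \cite{R} then yields $X\le Y$ throughout $B_o(R)$, which rewrites invariantly as the asserted bound using $g_{\alpha\bar\beta}-2\rho_\alpha\rho_{\bar\beta}$ for the transverse directions and $\rho_\alpha\rho_{\bar\beta}$ for the radial one. For the equality case, $X=Y$ forces $B=C$, i.e. $\rho_{kl}=\frac{1}{\sqrt2}\tau_{kl}^{\bar1}$ for $k,l\ge2$; feeding this into the Ricci identity $\rho_{kl}-\rho_{lk}=\tau_{kl}^{\bar\lambda}\rho_{\bla}$ (Lemma \ref{lem-ricci-identity}) forces $\tau_{kl}^{\bar1}=0$, and since the geodesic direction $e_1$ is arbitrary this gives $\tau_{ij}^\bk(o)=0$ for all indices. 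By the parallelism of the torsion (Theorem \ref{thm-para-tor}) we obtain $\tau\equiv0$, so $M$ is K\"ahler, and the rigidity conclusion follows from the corresponding K\"ahler result in \cite{TY2}.
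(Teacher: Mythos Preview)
Your proposal is essentially a detailed reproduction of the paper's proof of the \emph{Hessian} comparison (Theorem~4.1 in the paper): the same parallel frame, the same simplification of Lemma~\ref{lem-evolv-2-order} via Lemma~\ref{lem-criterion-nearly-kahler} and Corollary~\ref{cor-first-bian}, the same matrices $X,B,C,D,X_1$, the same completing-the-square, the same Riccati comparison, and the same equality analysis via $B=C\Rightarrow\tau_{kl}^{\bar1}=0\Rightarrow\tau\equiv0$ by Theorem~\ref{thm-para-tor}. Your extra paragraph identifying $u^{*}(C^{*}C+D)u$ with $-\tfrac12\mathcal R(e_1,\bar e_1,Y,\bar Y)$ is a useful elaboration of a step the paper leaves implicit.

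The one slip is that the statement you were asked to prove is the \emph{Laplacian} bound on $\Delta\rho$, not the Hessian bound on $\rho_{\alpha\bar\beta}$: your concluding sentence ``rewrites invariantly as the asserted bound using $g_{\alpha\bar\beta}-2\rho_\alpha\rho_{\bar\beta}$ \dots'' lands on the wrong target. The paper treats this corollary as immediate from the Hessian theorem, with no separate argument; to finish, you just need to trace: since on a nearly K\"ahler manifold $\Delta\rho=\Delta^L\rho=2\sum_k\rho_{k\bar k}=2\,\mathrm{tr}\,X$, the inequality $X\le Y$ gives $\Delta\rho\le 2\,\mathrm{tr}\,Y$, which is exactly the displayed right-hand side (e.g.\ for $K>0$, $2\bigl(\tfrac{\sqrt{2K}}{2}\cot(\sqrt{2K}\rho)+(n-1)\sqrt{K/2}\cot(\sqrt{K/2}\rho)\bigr)$). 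For equality, $X\le Y$ together with $\mathrm{tr}\,X=\mathrm{tr}\,Y$ forces $X=Y$ (since $Y-X\ge0$ with zero trace), and then your rigidity argument applies verbatim.
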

By the same argument as in \cite{Ch} (See also \cite{Li}), we have the following comparison of eigenvalues for nearly K\"ahler manifolds.
\begin{cor}
Let $(M,J,g)$ be a complete nearly K\"ahler manifold and $o$ be a fixed point in $M$. Let $B_o(R)$ be a geodesic ball within the cut-locus of $p$. Suppose that the quasi holomorphic bisectional curvature on $B_o(R)$ is not less than $K$ where $K$ is a constant. Then
\begin{equation}
\lambda_1(B_o(R))\leq \lambda_1(B_K(R))
\end{equation}
where $B_K(R)$ is the geodesic ball with radius $R$ in the K\"ahler space form with constant holomorphic bisectional curvature $K$. Moreover, if the equality holds, then $B_o(R)$ and $B_K(R)$ are holomorphically isometric to each other.
\end{cor}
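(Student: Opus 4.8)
The plan is to run the Cheng-type test-function comparison of \cite{Ch} (see also \cite{Li}), feeding in the Laplacian comparison corollary established just above as the differential inequality that drives the estimate. Write $\Lambda:=\lambda_1(B_K(R))$ and let $\phi$ be a first Dirichlet eigenfunction of $B_K(R)$. Since the K\"ahler space form of constant holomorphic bisectional curvature $K$ is rotationally symmetric about its center, $\phi$ may be taken to depend only on the distance $\rho$ to that center, with $\phi>0$ on $[0,R)$, $\phi(R)=0$, $\phi'(\rho)\le 0$, and $\phi'(0)=0$; it satisfies the radial equation
\begin{equation}
\phi''(\rho)+\phi'(\rho)\,\Delta_K\rho=-\Lambda\,\phi(\rho),
\end{equation}
where $\Delta_K\rho$ is the Laplacian of the distance function in the model, which is exactly the right-hand side of the preceding Laplacian comparison corollary.

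First I would transplant $\phi$ to $B_o(R)$ by setting $\bar\phi(x):=\phi(\rho(x))$, where $\rho$ is now the distance to $o$. Since $B_o(R)$ lies within the cut-locus of $o$, the function $\rho$ is smooth on $B_o(R)\setminus\{o\}$, so $\bar\phi$ is Lipschitz, lies in $H_0^1(B_o(R))$, and vanishes on $\partial B_o(R)$; the condition $\phi'(0)=0$ ensures that no singular contribution arises at $o$. Using $\|\nabla\rho\|=1$ I compute $\Delta\bar\phi=\phi''(\rho)+\phi'(\rho)\,\Delta\rho$. The Laplacian comparison corollary gives $\Delta\rho\le\Delta_K\rho$, and since $\phi'(\rho)\le 0$ this yields $\phi'(\rho)\,\Delta\rho\ge\phi'(\rho)\,\Delta_K\rho$, so
\begin{equation}
\Delta\bar\phi\ge\phi''(\rho)+\phi'(\rho)\,\Delta_K\rho=-\Lambda\,\bar\phi,
\end{equation}
that is, $\bar\phi$ is a nonnegative supersolution of $\Delta+\Lambda$ on $B_o(R)$.

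Next I would plug $\bar\phi$ into the variational characterization of $\lambda_1(B_o(R))$. Multiplying the last inequality by $\bar\phi\ge 0$, integrating over $B_o(R)$, and integrating the left side by parts (valid since $\bar\phi\in H_0^1$) gives $\int_{B_o(R)}\|\nabla\bar\phi\|^2\le\Lambda\int_{B_o(R)}\bar\phi^2$, so the Rayleigh quotient of $\bar\phi$ is at most $\Lambda$. The Rayleigh characterization of the first Dirichlet eigenvalue then yields $\lambda_1(B_o(R))\le\Lambda=\lambda_1(B_K(R))$, which is the asserted inequality.

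For the rigidity, suppose $\lambda_1(B_o(R))=\lambda_1(B_K(R))$. Then $\bar\phi$ attains the infimum in the Rayleigh quotient, hence is itself a first eigenfunction on $B_o(R)$, so every inequality above is an equality; in particular $\phi'(\rho)\,(\Delta\rho-\Delta_K\rho)=0$, and since $\phi'<0$ on $(0,R)$ we obtain $\Delta\rho=\Delta_K\rho$ throughout $B_o(R)\setminus\{o\}$. By Corollary \ref{cor-comp-quasi-kahler} the Laplacian equals twice the trace $\sum_\alpha\rho_{\alpha\bar\alpha}$ of the complex Hessian, while each eigenvalue of the Hermitian form $(\rho_{\alpha\bar\beta})$ is bounded above by the corresponding model value in the Hessian comparison theorem; equality of the traces then forces the full matrix equality $\rho_{\alpha\bar\beta}=(\text{model})_{\alpha\bar\beta}$, because a positive-semidefinite Hermitian difference of zero trace must vanish. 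The rigidity half of the Hessian comparison theorem now gives that $B_o(R)$ is holomorphically isometric to $B_K(R)$. The main obstacle is precisely this equality discussion, namely propagating scalar (trace) equality to full Hessian equality and then invoking the sharp rigidity of the Hessian comparison theorem, whereas the inequality itself is the routine Cheng argument.
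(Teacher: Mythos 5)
Your proposal is correct and takes essentially the same route as the paper: the paper proves this corollary simply by invoking the argument of Cheng \cite{Ch} (see also \cite{Li}), which is exactly your transplantation of the radial model eigenfunction, the Laplacian comparison corollary with $\phi'\le 0$, and the Rayleigh quotient bound. Your equality discussion --- upgrading trace equality $\Delta\rho=\Delta_K\rho$ to full matrix equality via positive semidefiniteness of the Hessian difference, then invoking the rigidity half of the sharp Hessian comparison theorem --- is precisely how the holomorphic isometry in the equality case is meant to be obtained here.
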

\begin{cor}
Let $(M,J,g)$ be a complete nearly K\"ahler manifold and $o$ be a fixed point in $M$. Let $B_o(R)$ be a geodesic ball within the cut-locus of $p$. Suppose that the quasi holomorphic bisectional curvature on $B_o(R)$ is not less than $K$ where $K$ is a constant. Then
\begin{equation}
V_o(R)\leq V_K(R)
\end{equation}
where $V_K(R)$ is the volume of $B_K(R)$. Moreover, if the equality holds, then $B_o(R)$ and $B_K(R)$ are holomorphically isometric to each other.
\end{cor}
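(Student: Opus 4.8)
The plan is to derive the volume inequality from the Laplacian comparison of the preceding corollary by the standard Bishop comparison argument, and to obtain the rigidity by tracing equality back through the Hessian comparison theorem. First I would introduce geodesic polar coordinates centred at $o$: for a unit direction $\theta\in S^{2n-1}$ and radius $\rho$ within the cut-locus, write the Riemannian volume element as $dV=J(\rho,\theta)\,d\rho\,d\theta$, where $J(\rho,\theta)$ is the Jacobian of $\exp_o$ along the ray in direction $\theta$. The basic identity from comparison geometry, which holds for the Levi-Civita Laplacian and hence---by Corollary \ref{cor-comp-quasi-kahler}, since $\Delta=\Delta^L$ on a nearly (hence quasi) K\"ahler manifold---for $\Delta$, is $\Delta\rho=\partial_\rho\log J(\rho,\theta)$.

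Next I would record the model side. In the K\"ahler space form of constant holomorphic bisectional curvature $K$ the isotropy group at a point acts as $U(n)$ on the tangent space, hence transitively on the unit sphere, so the model volume element is rotationally symmetric, $dV_K=J_K(\rho)\,d\rho\,d\theta$ with $J_K$ a function of $\rho$ alone; and the model distance Laplacian $\Delta_K\rho=\partial_\rho\log J_K(\rho)$ is precisely the right-hand side of the preceding corollary. That corollary then gives, at every point $\exp_o(\rho\theta)\in B_o(R)$, the inequality $\partial_\rho\log J(\rho,\theta)=\Delta\rho\le\Delta_K\rho=\partial_\rho\log J_K(\rho)$.

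The core of the argument is now elementary. Setting $\phi(\rho,\theta)=J(\rho,\theta)/J_K(\rho)$, the inequality above reads $\partial_\rho\log\phi\le 0$, so $\phi$ is nonincreasing in $\rho$ along each ray; since any smooth metric is Euclidean to leading order at $o$, both $J$ and $J_K$ behave like $\rho^{2n-1}$ as $\rho\to 0^+$, whence $\phi\to 1$. Therefore $\phi\le 1$, i.e. $J(\rho,\theta)\le J_K(\rho)$ pointwise on $B_o(R)$, and integrating over $\theta$ and $\rho\in(0,R)$ gives
\[
V_o(R)=\int_{S^{2n-1}}\int_0^R J(\rho,\theta)\,d\rho\,d\theta\le \mbox{vol}(S^{2n-1})\int_0^R J_K(\rho)\,d\rho=V_K(R),
\]
which is the claimed bound.

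For the rigidity, if $V_o(R)=V_K(R)$ then, since $J\le J_K$ pointwise, equality of the integrals forces $J(\rho,\theta)=J_K(\rho)$ for all $\theta$ and all $\rho\in(0,R]$, hence $\Delta\rho=\Delta_K\rho$ throughout $B_o(R)$. Writing $\Delta\rho=2\tr X$ with $X=(\rho_{k\bar l})$, and recalling from the Hessian comparison theorem the matrix inequality $X\le Y$ with $Y$ the model Hessian, the equality $\tr X=\tr Y$ together with $Y-X\ge 0$ as a Hermitian matrix forces $Y-X=0$; thus $X=Y$ all over $B_o(R)$, and the equality case of the Hessian comparison theorem identifies $B_o(R)$ holomorphically and isometrically with $B_K(R)$. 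The only real obstacle is bookkeeping: justifying the normalisation $\phi\to 1$ at $o$ from the common $\rho^{2n-1}$ leading behaviour, and confirming that $J_K$ is genuinely radial despite the anisotropic $1+(n-1)$ splitting of the model Hessian, which is compatible with radiality precisely because the splitting is aligned with the $U(n)$-invariant complex structure.
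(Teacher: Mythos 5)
Your proposal is correct and follows exactly the route the paper intends: the corollary is stated there without proof as a ``direct corollary'' of the Hessian/Laplacian comparison, via the standard polar-coordinate (Bishop-type) argument $\Delta\rho=\partial_\rho\log J$ with monotonicity of $J/J_K$, and rigidity traced back through the equality case of the Hessian comparison theorem --- precisely your argument. Your bookkeeping (invoking $\Delta=\Delta^L$ from Corollary \ref{cor-comp-quasi-kahler}, the $U(n)$-radiality of the model Jacobian, and the trace argument forcing the matrix equality $X=Y$) correctly supplies the details the paper leaves implicit.
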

\begin{cor}
Let $(M,J,g)$ be a complete nearly K\"ahler manifold with quasi holomorphic bisectional curvature $\geq K$ with $K>0$. Then
\begin{equation}
V(M)\leq V(\mathbb{CP}^n_K)
\end{equation}
where $\mathbb{CP}^n_K$ means $\mathbb{CP}^n$ equipped with a K\"ahler metric with constant bisectional curvature $K$. Moreover, if the equality holds, $M$ is holomorphically isometric to $\mathbb{CP}^n_K$.
\end{cor}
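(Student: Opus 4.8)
The plan is to run a Bishop--Gromov type volume comparison, using the Laplacian comparison of the preceding corollary as the key differential inequality and the rigidity clause of the Hessian comparison theorem to settle equality. First I would fix $o\in M$ and work in geodesic polar coordinates $(\rho,\theta)$ centered at $o$, writing the Riemannian volume element as $dV=J(\rho,\theta)\,d\rho\,d\theta$ on the star-shaped domain $\{(\rho,\theta):0\le\rho<c(\theta)\}$ cut out by the cut distance $c(\theta)$; since the cut locus has measure zero, $V(M)=\int_S\int_0^{c(\theta)}J(\rho,\theta)\,d\rho\,d\theta$. The standard first-variation identity gives $\p_\rho\log J=\Delta^L\rho$, and because a nearly K\"ahler manifold is quasi K\"ahler (Lemma \ref{lem-criterion-nearly-kahler} together with Proposition \ref{prop-crt-quasi-kahler}), Corollary \ref{cor-comp-quasi-kahler} yields $\Delta^L\rho=\Delta\rho$. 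I would then identify the model: the hypothesis ``constant holomorphic bisectional curvature $K$'' means $\mathbb{CP}^n_K$ has holomorphic sectional curvature $2K$ and transverse sectional curvatures equal to $K/2$, so its polar volume density is $J_K(\rho)=\tfrac{1}{\sqrt{2K}}\sin(\sqrt{2K}\rho)\bigl(\tfrac{1}{\sqrt{K/2}}\sin(\sqrt{K/2}\rho)\bigr)^{2n-2}$, and one checks that $\p_\rho\log J_K$ equals exactly the right-hand side $\sqrt{2K}\bigl(\cot(\sqrt{2K}\rho)+(n-1)\cot(\sqrt{K/2}\rho)\bigr)$ of the Laplacian comparison corollary.

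With this setup the Laplacian comparison corollary reads $\p_\rho\log J\le\p_\rho\log J_K$ along each radial geodesic. Integrating and using that $J(\rho,\theta)\sim\rho^{2n-1}\sim J_K(\rho)$ as $\rho\to0^+$ shows that $\rho\mapsto J(\rho,\theta)/J_K(\rho)$ is non-increasing and bounded above by $1$, so $J(\rho,\theta)\le J_K(\rho)$ for all $\rho<c(\theta)$. Since $J_K(\rho)\to0$ as $\rho\to\pi/\sqrt{2K}$ while $J>0$ strictly inside the cut locus, this forces $c(\theta)\le\pi/\sqrt{2K}$ for every $\theta$; in particular $M$ is bounded, hence compact. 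Because $\mathbb{CP}^n$ is two-point homogeneous, $J_K$ is independent of direction and the cut distance of $\mathbb{CP}^n_K$ is the constant $\pi/\sqrt{2K}$, so $V(\mathbb{CP}^n_K)=\int_S\int_0^{\pi/\sqrt{2K}}J_K(\rho)\,d\rho\,d\theta$. Combining the pointwise bound $J\le J_K$ with $c(\theta)\le\pi/\sqrt{2K}$ gives $V(M)=\int_S\int_0^{c(\theta)}J\,d\rho\,d\theta\le\int_S\int_0^{\pi/\sqrt{2K}}J_K\,d\rho\,d\theta=V(\mathbb{CP}^n_K)$, the desired inequality; equivalently the ratio $V_o(R)/V_K(R)$ is non-increasing with limit $1$ as $R\to0^+$.

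For the equality case, suppose $V(M)=V(\mathbb{CP}^n_K)$. Then the monotone ratio $V_o(R)/V_K(R)$ is identically $1$, forcing $J(\rho,\theta)\equiv J_K(\rho)$ on the whole domain and $c(\theta)\equiv\pi/\sqrt{2K}$; consequently $\p_\rho\log J=\p_\rho\log J_K$, i.e. equality holds in the Laplacian comparison at every point inside the cut locus. Now the Hessian comparison theorem provides the \emph{matrix} inequality $X\le X_K$, where $X=(\rho_{\a\bbe})$ and $X_K$ is the model Hessian, while in the quasi K\"ahler setting $\Delta\rho=2\,\tr X$ (up to the fixed positive normalization); thus equality of the Laplacians gives $\tr(X_K-X)=0$ together with $X_K-X\ge0$, and a positive-semidefinite Hermitian matrix of vanishing trace must vanish. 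Hence $X=X_K$ everywhere, i.e. equality holds throughout in the Hessian comparison theorem on every ball $B_o(R)$ with $R<\pi/\sqrt{2K}$. Invoking the rigidity clause of that theorem, each such $B_o(R)$ is holomorphically and isometrically equivalent to the corresponding geodesic ball in $\mathbb{CP}^n_K$; letting $R\to\pi/\sqrt{2K}$ and using completeness, $M$ is holomorphically isometric to $\mathbb{CP}^n_K$.

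The main obstacle I anticipate is the equality analysis rather than the inequality: one must upgrade the single global volume equality to pointwise equality in the full Hessian (not merely in its trace), which is exactly where the combination of the matrix inequality $X\le X_K$ with the trace identity $\Delta\rho=2\,\tr X$ is essential, and then bootstrap the local holomorphic isometries supplied by the Hessian rigidity into one global holomorphic isometry, for which completeness and the constancy of the model cut distance are needed. A secondary technical point is the careful identification of the model curvature normalization, so that $J_K$ and its logarithmic derivative match the constants appearing in the Laplacian comparison corollary.
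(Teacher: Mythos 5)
Your proposal is correct and follows essentially the route the paper intends: the paper states this as a direct corollary of the Hessian/Laplacian comparison via the standard Bishop--Gromov argument (as in \cite{Ch,Li,TY2}), which is exactly your monotonicity of $J/J_K$, and your equality analysis (volume rigidity forces equality of traces, the matrix inequality $X\le X_K$ upgrades this to full Hessian equality, and the rigidity clause of the Hessian comparison theorem then gives the holomorphic isometry) is the same mechanism the paper relies on, including the bridge $\Delta=\Delta^L$ from Corollary \ref{cor-comp-quasi-kahler}. Your identification of the model normalization (holomorphic sectional curvature $2K$, transverse curvature $K/2$, cut distance $\pi/\sqrt{2K}$) matches the constants in the paper's comparison exactly.
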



\begin{thebibliography}{99}

\bibitem{AD}Apostolov, Vestislav; Dr\u{a}ghici, Tedi.{\sl The curvature and the integrability of almost-K\"ahler manifolds: a survey.} Symplectic and contact topology: interactions and perspectives (Toronto, ON/Montreal, QC, 2001), 25¨C53, Fields Inst. Commun., 35, Amer. Math. Soc., Providence, RI, 2003.
\bibitem{B}
Butruille, Jean-Baptiste. {\sl Classification des vari\'et\'es approximativement k\"ahleriennes homog\'enes. (French) [Classification of nearly-K\"ahler homogeneous manifolds]} Ann. Global Anal. Geom. 27 (2005), no. 3, 201--225.
\bibitem{CY}
Chen, Zhi Hua; Yang, Hong Cang. {\sl Estimation of the upper bound on the Levi form of the distance function on Hermitian manifolds and some of its applications.} (Chinese) Acta Math. Sinica 27 (1984), no. 5, 631--643.
\bibitem{Ch}Cheng, S.-Y., {\sl Eigenvalue comparison theorems and its geometric applications.} Math. Z. 143 (1975), no. 3, 289--297.
\bibitem{Chern}
Chern, S.-S. {\it Characteristic classes of Hermitian manifolds},
Ann. of Math. (2) \textbf{47} (1946), no. 1, 85--121.

\bibitem{D2}
Donaldson, S. K. {\sl Remarks on gauge theory, complex geometry and 4-manifold topology.} Fields Medallists' lectures, 384--403, World Sci. Ser. 20th Century Math., 5, World Sci. Publ., River Edge, NJ, 1997.
\bibitem{Donaldson}
Donaldson, S. K. {\sl Two-forms on four-manifolds and elliptic equations.}
 Inspired by S. S. Chern, 153--72, Nankai Tracts Math., 11, World Sci. Publ., Hackensack, NJ, 2006.
\bibitem{e}
Ehresmann,C.;Libermann, P. {\it Sur les structures presque
hermitiennes isotropes},
 C. R. Acad. Sci. Paris \textbf{232} (1951), 1281--1283.
 \bibitem{FTY}Fan, Xu-Qian; Tam, Luen-Fai; Yu, Chengjie. {\sl Product of almost Hermitian manifolds } arXiv:1109.2498

\bibitem{F} Futaki, A., {\sl K\"ahler-Einstein metrics and integral invariants}, Lecture notes in mathematics    1314,
  Springer-Verlag, 1988.
\bibitem{g}
Gauduchon, P. {\it Hermitian connections and Dirac operators}, Boll.
Unione Mat. Ital. B \textbf{11} (1997), no. 2, suppl, 257--288.

\bibitem{Goldberg}Goldberg, S. I. {\sl Integrability of almost Kaehler manifolds.} Proc. Amer. Math. Soc. 21 1969 96--100.

\bibitem{Gray1}
 Gray, Alfred.  {\sl Curvature identities for Hermitian and almost Hermitian manifolds.}
  Tohoku Math. J. (2) 28 (1976), no. 4, 601--12.
\bibitem{Gray2}
Gray, Alfred. {\sl The structure of nearly K\"ahler manifolds.} Math. Ann. 223 (1976), no. 3, 233--248.
\bibitem{Gray3}
Gray, Alfred. {\sl Nearly K\"ahler manifolds.} J. Differential Geometry 4 1970 283--309.
\bibitem{Gray4}
Gray, Alfred. {\sl Riemannian manifolds with geodesic symmetries of order 3.} J. Differential Geometry 7 (1972), 343--369.
\bibitem{Ki}
Kirichenko, V.F.  {\sl K-spaces of maximal rank}, Mat. Zametki, 22, no. 4 (1977), pp. 465--476.
\bibitem{Li}
Li, Peter. {\sl Lecture notes on geometric analysis.} Lecture Notes Series, 6. Seoul National University, Research Institute of Mathematics, Global Analysis Research Center, Seoul, 1993. iv+90 pp.
\bibitem{LW} Li, P. and  Wang, J.-P.. {\sl Comparison theorem for K\"ahler manifolds and positivity of spectrum.} J. Differential Geom. 69 (2005), no. 1, 43--74.

\bibitem{k}
Kobayashi, S. {\it Almost complex manifolds and hyperbolicity},
Results Math.
\textbf{40} (2001), no. 1-4, 246--256.

\bibitem{k2}
Kobayashi, S. {\it Natural connections in almost complex manifolds},
Explorations in complex and Riemannian geometry, 153--169,
Contemp. Math., 332, Amer. Math. Soc., Providence, RI, 2003.

\bibitem{Na1}
Nagy, Paul-Andi.{\sl Nearly K\"ahler geometry and Riemannian foliations.} Asian J. Math. 6 (2002), no. 3, 481--504.
\bibitem{Na2}
Nagy, Paul-Andi. {\sl On nearly-K\"ahler geometry.} Ann. Global Anal. Geom. 22 (2002), no. 2, 167--178.
\bibitem{R}
Royden, H. L. {\sl Comparison theorems for the matrix Riccati equation.} Comm. Pure Appl. Math. 41 (1988), no. 5, 739--746.
\bibitem{TY2}
Tam, Luen-Fai; Yu, Chengjie. {\sl Some comparison theorems for K\"ahler manifolds.} Manuscripta Math. 137 (2012), no. 3-4, 483--495.
\bibitem{vt}
Tosatti, V. {\it A general Schwarz lemma for almost-Hermitian
manifolds},  Comm. Anal. Geom.
\textbf{15} (2007), no. 5, 1063--1086.

\bibitem{twy}
Tosatti,V.; Weinkove, B.;Yau, S.T. {\it Taming symplectic forms and
the Calabi-Yau equation}, Proc. London Math. Soc. \textbf{97} (2008),
no. 3, 401--424.
\bibitem{Yau}Yau, S.T. {\sl
A general Schwarz lemma for K\"ahler manifolds.}
Amer. J. Math. 100 (1978), no. 1, 197-¨C203.
\bibitem{Yu3}Yu, Chengjie. {\sl curvature identities on almost Hermitian manifolds and applications.} arXiv: 1209.5642.

\end{thebibliography}
\end{document}